\DeclareMathOperator{\ddiv}{div}
\begin{document}

\title*{Robust Block Preconditioners for Biot's Model}
\titlerunning{Precondition for Biot's Model} 
% Use \titlerunning{Short Title} for an abbreviated version of
% your contribution title if the original one is too long
\author{James ~H. Adler \and Francisco J. Gaspar \and Xiaozhe Hu \and Carmen Rodrigo \and Ludmil ~T. Zikatanov}
\authorrunning{J.H.~Adler \and F.J.~Gaspar \and X.~Hu \and C.~Rodrigo \and L.T.~Zikatanov}
% Use \authorrunning{Short Title} for an abbreviated version of
% your contribution title if the original one is too long
\institute{J.~H. Adler \at Department of Mathematics, Tufts University, Medford, Massachusetts 02155, USA, \email{James.Adler@tufts.edu}
\and F.~J. Gaspar \at Departamento de Matem\'{a}tica Aplicada, Universidad de Zaragoza, Zaragoza, Spain, \email{fjgaspar@unizar.es}
\and X.~Hu \at Department of Mathematics, Tufts University, Medford, Massachusetts 02155, USA, \email{Xiaozhe.Hu@tufts.edu}
\and C.~Rodrigo \at Departamento de Matem\'{a}tica Aplicada, Universidad de Zaragoza, Zaragoza, Spain, \email{carmenr@unizar.es}
\and L.~T. Zikatanov \at Department of Mathematics, Penn State, University Park, Pennsylvania, 16802, USA, \email{ludmil@psu.edu}
}
%
% Use the package "url.sty" to avoid
% problems with special characters
% used in your e-mail or web address
%
\maketitle

%\vskip -100pt

\abstract{In this paper, we design robust and efficient block
  preconditioners for the two-field formulation of Biot's
  consolidation model, where stabilized finite-element discretizations
  are used. The proposed block preconditioners are based on the well-posedness of the discrete linear systems.  Block diagonal (norm-equivalent) and block triangular preconditioners are developed, and we prove that these methods are robust with respect to both physical and discretization parameters. Numerical results are presented to support the theoretical results.
}

\section{Introduction} \label{sec:intro} In this work, we study the
quasi-static Biot's model for soil consolidation.  For linearly elastic, homogeneous, and isotropic porous medium, saturated by an incompressible Newtonian fluid, the consolidation is
  modeled by the following system of partial differential
  equations (see~\cite{Biot1}): 
\begin{eqnarray}
\mbox{\rm equilibrium equation:} & & -{\rm div} \, \sigma' +
\alpha \nabla \, p = {\bm g}, \quad {\rm in} \, \Omega, \label{eq11} \\
\mbox{\rm constitutive equation:} & & \sigma' = 2\mu \varepsilon(\bm{u})  + \lambda\ddiv(\bm{u}) I, \quad {\rm in} \, \Omega,
\label{eq12} \\
\mbox{\rm compatibility condition:} & & \varepsilon({\bm u}) = \frac{1}{2}(\nabla {\bm u} + \nabla
{\bm u}^t), \quad {\rm in} \, \Omega,
\label{eq13} \\
\mbox{\rm Darcy's law:} & & {\bm w} = - K \nabla
p, \quad {\rm in} \, \Omega,
\label{eq14} \\
\mbox{\rm continuity equation:} & & -
\alpha \ {\ddiv} \, \partial_t \bm u  -{\ddiv} \, {\bm w} = f, \quad {\rm in} \, \Omega,
\label{eq15}
\end{eqnarray}
where $\lambda$ and $\mu$ are the Lam\'{e} coefficients, $\alpha$ is the Biot-Willis constant (assumed to be one without loss of generality), $K$ is the hydraulic conductivity (ratio of the permeability of the porous medium to the viscosity of the fluid), $I$ is the identity tensor, ${\bm u}$ is the
displacement vector, $p$ is the pore pressure, $\sigma'$ and $\varepsilon$ are the effective stress and strain tensors for the porous medium, and ${\bm w}$ is the percolation velocity of the fluid relative to the soil. The right-hand-side term, ${\bm g}$, is the density of applied body forces and the source term $f$ represents a forced fluid extraction or injection process. Here, we consider a bounded open subset, $\Omega \subset {\mathbb R}^d,\; d = 2, 3$ with regular boundary $\Gamma$.

Suitable discretizations yield a large-scale linear system of
equations to solve at each time step, which are typically
ill-conditioned and difficult to solve in
practice.  % Therefore, the focus of this work is effective solvers for solving discrete Biot's model.
Thus, iterative solution techniques are usually considered.  For the
coupled poromechanics equations considered here, there are two typical
approaches: fully-coupled or monolithic methods and iterative coupling
methods. Monolithic techniques solve the resulting linear system
simultaneously for all the involved unknowns. In this context,
efficient preconditioners are developed to accelerate the convergence
of Krylov subspace methods and special smoothers are designed in a
multigrid framework. Examples of this approach for poromechanics is
found in \cite{Bergamaschi2007, Ferronato2010, Gaspar2004, Luo2017,
  Gaspar2017sub, Lee.J;Mardal.K;Winther.R2017a, Baerland.T;Lee.J;Mardal.K;Winther.R2017a} and the references therein. Iterative coupling
\cite{Kim.J;Tchelepi.H;Juanes.R;others2009a, Kim_PhD}, in contrast, is a sequential approach in
which either the fluid flow problem or the geomechanics part is solved
first, followed by the solution of the other system. This process is
repeated until a converged solution within a prescribed tolerance is
achieved. The main advantage of iterative coupling methods is that
existing software for simulating fluid flow and geomechanics can be
reused. These type of schemes have been widely studied \cite{Mikelic.A;Wheeler.M2013a,
  Both2017, Almani, Bause}.  In particular, in \cite{Castelleto2015}
and \cite{White.J;Castelletto.N;Tchelepi.H2016a} a re-interpretation of the four commonly used
sequential splitting methods as preconditioned-Richardson iterations
with block-triangular preconditioning is presented. Such analysis
indicates that a fully-implicit method outperforms the convergence
rate of the sequential-implicit methods. Following this idea a family
of preconditioners to accelerate the convergence of Krylov subspace
methods was recently proposed for the three-field formulation of the
poromechanics problem \cite{Castelleto2016}.

In this work, we take the monolithic approach and develop efficient
block preconditoners for Krylov subspace methods for solving the
linear systems of equations arising from the discretization of the
two-field formulation of Biot's model.  These preconditioners take
advantage of the block structure of the discrete problem, decoupling
different fields at the preconditioning stage.
Our theoretical results show their efficiency and robustness 
with respect to the physical and discretization 
parameters.  Moreover, the techniques proposed here 
can also be used for designing fast solvers for the three-field
formulation of the Biot's model.
% are also 
% related to the iterative coupling methods 
 % They are constructed
% based on the well-posedness of the discretized problem at each time
% step.

The paper is organized as follows.  Section~\ref{sec:prelim}
introduces the stabilized finite-element discretizations for the
two-field formulation and the basics of block preconditioners. The
proposed block preconditioners are introduced in
Section~\ref{sec:prec}% , and in Section~\ref{sec:three-field}, we discuss the possible generalization to the three-field formulation
.  Finally, in Section~\ref{sec:numerics}, we present numerical
experiments illustrating the effectiveness and robustness of the
proposed preconditioners and make concluding remarks in
Section~\ref{sec:conclusion}.

%%%%%%%%%%%%%%%%%%%%%%%%%%
\section{Two-Field Formulation} \label{sec:prelim}
%%%%%%%%%%%%%%%%%%%%%%%%%%
% In this section, we recall the variational formulation of the Biot's model~\eqref{eq11}-\eqref{eq15}.  We mainly focus on two-field formation here.  The possible generalization to the three-field formulation will be discussed later in Section~\ref{sec:three-field}.  Since our focus here is to design block preconditioners, we include a short introduction of the general framework for block preconditioners~\cite{Loghin.D;Wathen.A2004,Mardal.K;Winther.R2010}.  

% %%%%%%%%%%%%%%%%%%%%%%%%%%
% \subsection{Weak Formulations} \label{subsec:weak-form}
% %%%%%%%%%%%%%%%%%%%%%%%%%%
First, we consider the two-field formulation of Biot's model~\eqref{eq11}-\eqref{eq15}, where the unknowns are the displacement $\bm{u}$ and the pressure $p$.
%and three-field (the displacement $\bm{u}$, the Darcy velocity $\bm{w}$, and the pressure $p$).  
By considering apropriate Sobolev spaces and integration by parts, we have the following variational form: 
%\noindent \textbf{Two-field formulation.} 
find $\bm{u}(t) \in \bm{H}_0^1(\Omega)$ and $p(t) \in H_0^1(\Omega)$, such that
\begin{eqnarray}
&&a(\bm{u}, \bm{v}) - \alpha (\ddiv \bm{v}, p) = (\bm{g},\bm{v}), \quad \forall  \bm{v}  \in \bm{H}_0^1(\Omega) ,  \label{eqn:weak-2f-1}  \\
&&- \alpha (\ddiv \partial_t \bm{u}, q) - a_p(p,q) = (f, q), \quad \forall q \in H_0^1(\Omega) , \label{eqn:weak-2f-2}
\end{eqnarray}
where
\begin{align*}
a(\bm{u},\bm{v}) &= 2\mu\int_{\Omega}\varepsilon(\bm{u}):\varepsilon(\bm{v}) + \lambda\int_{\Omega} \ddiv\bm{u}\ddiv\bm{v}  \quad \text{and} \quad
a_p(p,q) &= \int_{\Omega} K\nabla p\cdot \nabla q.
\end{align*}
The system is completed with suitable initial data $\bm{u}(0)$ and $p(0)$.

%For three-field formulation, we basically follow the same procedure and have,
%\noindent \textbf{Three-field formulation.} Find $(\bm{u},\bm{w}, p)\in \bm{H}^1_0(\Omega) \times \bm{H}_0(\ddiv, \Omega) \times L^2(\Omega)$ such that
%\begin{eqnarray}
%  && a(\bm{u},\bm{v}) - \alpha (p,\ddiv \bm{v}) = (\bm g,\bm{v}),
%     \quad \forall \  \bm{v}\in \bm{H}^1_0(\Omega), \label{eqn:weak-3f-1}\\
%  && (K^{-1}\bm{w},\bm{r}) - (p,\ddiv \bm{r}) = 0, \quad \forall \ \bm{r}\in \bm{H}_0(\ddiv, \Omega),\label{eqn:weak-3f-2}\\
%  && - \alpha (\ddiv \partial_t \bm{u},q) - (\ddiv \bm{w},q)   = (f,q), \quad \forall \ q \in L^2(\Omega),\label{eqn:weak-3f-3}
%\end{eqnarray}
%
%The two-field formulation~\eqref{eqn:weak-2f-1}-\eqref{eqn:weak-2f-2} and the three-field formulation~\eqref{eqn:weak-3f-1}-\eqref{eqn:weak-3f-3} are completed with suitable initial data $\bm{u}(0)$ and $p(0)$.
%

%%%%%%%%%%%%%%%%%%%%%%%%%%
\subsection{Finite-Element Method} \label{subsec:FEM}
%%%%%%%%%%%%%%%%%%%%%%%%%%
We consider two stable discretizations for the two-field formulation
of Biot's model proposed
in~\cite{Rodrigo.C;Gaspar.F;Hu.X;Zikatanov.L2016a}: % and a
                                % nonconforming scheme for the
                                % three-field formulation proposed
                                % in~\cite{Hu.X;Rodrigo.C;Gaspar.F;Zikatanov.L2016a}.  
$\mathbb{P}_1$-$\mathbb{P}_1$ elements and the Mini
element with stabilization.  The fully discretized scheme at time
$t_n$, $n=1,2,\dots$ is as follows: \\
Find $\bm{u}_h^n \in \bm{V_h} \subset \bm{H}^1_0(\Omega)$ and $p_h^n \in Q_h \subset H^1_0(\Omega)$, such that,
\begin{align}
& a(\bm{u}_h^n, \bm{v}_h) - \alpha (\ddiv \bm{v}_h, p_h^n) = (\bm{g}(t_n),\bm{v}_h), \quad \forall \bm{v}_h \in \bm{V}_h, \label{eqn:2f-stab-1}  \\
& -\alpha(\ddiv \bar{\partial}_t \bm{u}_h^n, q_h) - a_p(p_h^n, q_h) - \eta h^2 (\nabla \bar{\partial}_t p^n_h, \nabla q_h) = (f(t_n), q_h), \quad \forall q_h \in Q_h, \label{eqn:2f-stab-2}
\end{align}
where $\bar{\partial}_t \bm{u}_h^n := (\bm{u}_h^n -
\bm{u}_h^{n-1})/\tau$, $\bar{\partial}_t p_h^n := (p_h^n -
p_h^{n-1})/\tau$, and $\eta$ represents the stabilization parameter.  Here, $\bm{V}_h$ and $Q_h$ come from the
$\mathbb{P}_1$-$\mathbb{P}_1$ or Mini element.  At each time step, the linear system has the following two-by-two block form: 
\begin{equation}\label{eqn:2f-linear-stab}
\mathcal{A}\bm{x} = \bm{b}, \quad \mathcal{A}= 
\begin{pmatrix}
A_{\bm{u}} &~& \alpha B^T \\
\alpha B &~& - \tau A_p - \eta h^2 L_p
\end{pmatrix}, 
\
\bm{x} = 
\begin{pmatrix}
\bm{u} \\
p
\end{pmatrix},
\ \text{and} \
\bm{b} = 
\begin{pmatrix}
\bm{f}_{\bm{u}} \\
f_p
\end{pmatrix},
\end{equation}
where $a(\bm{u}, \bm{v}) \rightarrow A_{\bm{u}}$, $-(\ddiv \bm{u},
q) \rightarrow B$,  $a_p( \nabla p, \nabla q) \rightarrow A_p$, and
$(\nabla p, \nabla q) \rightarrow L_p$ represent the discrete versions
of the variational forms.

%%%%%%%%%%%%%%%%%%%%%%%%%%
\subsection{Block Preconditioners} \label{subsec:block-prec}
%%%%%%%%%%%%%%%%%%%%%%%%%%
%Now, we introduce the general theory following the notation in~\cite{Mardal.K;Winther.R2010}.
Next, we introduce the general theory for designing block
preconditioners of Krylov subspace iterative
methods~\cite{Loghin.D;Wathen.A2004, Mardal.K;Winther.R2010}.  Let
$\bm{X}$ be a real, separable Hilbert space equipped with norm
$ \| \cdot \|_{\bm{X}}$ and inner product $(\cdot, \cdot)_{\bm{X}}$.
Also let $\mathcal{A}: \bm{X} \mapsto \bm{X}'$ be a bounded and
symmetric operator induced by a symmetric and bounded bilinear form
$\mathcal{L}(\cdot, \cdot)$, i.e.
$\langle \mathcal{A} \bm{x}, \bm{y} \rangle = \mathcal{L}(\bm{x},
\bm{y})$.
We assume the bilinear form is bounded and satisfies an inf-sup
condition:
\begin{equation}\label{ine:inf-sup-L}
| \mathcal{L}(\bm{x}, \bm{y}) | \leq \beta \| \bm{x} \|_{\bm{X}} \|
\bm{y} \|_{\bm{X}}, \ \forall \bm{x}, \bm{y} \in \bm{X} \quad \mbox{and} \quad \inf_{\bm{x} \in \bm{X}} \sup_{\bm{y} \in \bm{X}} \frac{\mathcal{L}(\bm{x}, \bm{y}) }{\| \bm{x} \|_{\bm{X}} \| \bm{y} \|_{\bm{X}}} \geq \gamma > 0.
\end{equation} 

% In operator form we have 
% \begin{equation}\label{eqn:Ax=b}
% \mathcal{A} \bm{x} = \bm{b},
% \end{equation} 
% and our goal is to design preconditioners for solving~\eqref{eqn:Ax=b}.

%%%%%%%%%%%%%%%%%%%%%%%%%%
\subsubsection{Norm-equivalent Preconditioner} \label{subsubsec:norm-equiv}
%%%%%%%%%%%%%%%%%%%%%%%%%%
Consider a symmetric positive definite (SPD) operator  $\mathcal{M}: \bm{X}' \mapsto \bm{X}$ as a
preconditioner for solving~$\mathcal{A} \bm{x} = \bm{b}$.  We define
an inner product $(\bm{x}, \bm{y})_{\mathcal{M}^{-1}}:= \langle
\mathcal{M}^{-1}\bm{x}, \bm{y} \rangle$ on $\bm{X}$ and the
corresponding induced norm is $\| \bm{x} \|^2_{\mathcal{M}^{-1}}:=
(\bm{x}, \bm{x})_{\mathcal{M}^{-1}}$.  It is easy to show that
$\mathcal{MA}: \bm{X} \mapsto \bm{X}$ is symmetric with respect to
$(\cdot, \cdot)_{\mathcal{M}^{-1}}$.  Therefore, we can use
$\mathcal{M}$ as a preconditioner for the MINRES algorithm and use the
following theorem for the convergence rate of preconditioned MINRES.

\begin{theorem}{\cite{Greenbaum.A.1997a}}\label{thm:pminres}
If $\bm{x}^m$ is the $m$-th iteration of MINRES and $\bm{x}$ is the exact solution, then, % there exists a constant $\rho \in (0,1)$, such that
% \begin{equation} \label{ine:pminres}
% \langle \mathcal{MA}  (\bm{x} - \bm{x}^m), \mathcal{A}( \bm{x}- \bm{x}^m) \rangle^{1/2} \leq 2 \rho^m \langle \mathcal{MA} ( \bm{x} - \bm{x}^0), \mathcal{A}( \bm{x}- \bm{x}^0) \rangle^{1/2},
% \end{equation}
% where $\rho = \frac{\kappa(\mathcal{MA}) - 1}{\kappa(\mathcal{MA}) +1}$ and $\kappa(\mathcal{MA})$ denotes the condition number of $\mathcal{MA}$.
\begin{equation} \label{ine:pminres}
\|\bm{r}^m\|_{\mathcal{M}}
\leq 2 \rho^m \|\bm{r}^0\|_{\mathcal{M}},\quad
\end{equation}
where $\bm{r}^{k} = \mathcal{A}(x-x^k)$ is the residual after the
$k$-th iteration,
$\rho = \frac{\kappa(\mathcal{MA}) - 1}{\kappa(\mathcal{MA}) +1}$, and
$\kappa(\mathcal{MA})$ denotes the condition number of $\mathcal{MA}$.
\end{theorem}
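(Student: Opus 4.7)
The plan is to reduce the convergence bound to a minimax polynomial estimate on $\sigma(\mathcal{MA})$, and then apply a Chebyshev-polynomial construction on that spectrum.

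First, I would make precise the $\mathcal{M}^{-1}$-symmetry of $\mathcal{MA}$ already noted in the text. Since $(\mathcal{MA}\bm{x},\bm{y})_{\mathcal{M}^{-1}} = \langle \mathcal{A}\bm{x},\bm{y}\rangle$ is symmetric, $\mathcal{MA}$ has a real spectrum and a basis of eigenvectors orthonormal in $(\cdot,\cdot)_{\mathcal{M}^{-1}}$. A short computation also shows $\|\bm{r}\|_{\mathcal{M}}^2 = (\mathcal{M}\bm{r},\mathcal{M}\bm{r})_{\mathcal{M}^{-1}}$, so the $\mathcal{M}$-norm of a residual corresponds to the $\mathcal{M}^{-1}$-norm of the quantity on which $\mathcal{MA}$ acts symmetrically.

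Next, I would invoke the defining optimality of MINRES: the iterate $\bm{x}^m \in \bm{x}^0 + \mathcal{K}_m(\mathcal{MA},\mathcal{M}\bm{r}^0)$ is chosen so that $\|\bm{r}^m\|_{\mathcal{M}}$ is minimized over this affine Krylov subspace. This implies $\bm{r}^m = q(\mathcal{MA})\bm{r}^0$ for some $q \in \mathcal{P}_m$ with $q(0)=1$, and expanding $\bm{r}^0$ in the eigenbasis of $\mathcal{MA}$ yields the minimax bound
\begin{equation*}
\|\bm{r}^m\|_{\mathcal{M}} \leq \Bigl(\min_{q \in \mathcal{P}_m,\,q(0)=1}\ \max_{\lambda \in \sigma(\mathcal{MA})}|q(\lambda)|\Bigr)\,\|\bm{r}^0\|_{\mathcal{M}}.
\end{equation*}

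The main obstacle is exhibiting a polynomial attaining the rate $2\rho^m$ with $\rho=(\kappa-1)/(\kappa+1)$. When $\mathcal{MA}$ is SPD, so that $\sigma(\mathcal{MA}) \subset [\lambda_{\min},\lambda_{\max}]$, the standard Chebyshev polynomial of degree $m$ affinely pulled back from $[-1,1]$ to this interval and renormalized at $0$ yields $\max_\lambda |q(\lambda)| \leq 2\rho^m$ (in fact with the sharper constant $(\sqrt{\kappa}-1)/(\sqrt{\kappa}+1)$, which is dominated by the quoted rate). In the indefinite case, relevant to the saddle-point-like systems arising from Biot's model, the spectrum lies in a union $[-\Lambda_{\max},-\Lambda_{\min}] \cup [\lambda_{\min},\lambda_{\max}]$; one then applies the same Chebyshev construction in the variable $\lambda^2$ on the corresponding squared interval, obtaining the same bound with $\kappa$ interpreted as the ratio of the largest to smallest absolute eigenvalue of $\mathcal{MA}$. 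Combining either construction with the minimax inequality above yields the claim.
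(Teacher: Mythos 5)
The paper offers no proof of this statement at all: Theorem~\ref{thm:pminres} is imported verbatim from Greenbaum's book and used as a black box, so there is no in-paper argument to compare against. Your strategy is the standard one for this classical result: the $\mathcal{M}^{-1}$-self-adjointness of $\mathcal{MA}$, the identity $\|\bm r\|_{\mathcal{M}}=\|\mathcal{M}\bm r\|_{\mathcal{M}^{-1}}$, the reduction of the MINRES optimality property to a minimax polynomial problem on $\sigma(\mathcal{MA})$, and a Chebyshev construction. The first three steps are sound (modulo the cosmetic point that the residual polynomial acts as $\mathcal{M}\bm r^m=q(\mathcal{MA})\mathcal{M}\bm r^0$, equivalently $\bm r^m=q(\mathcal{AM})\bm r^0$, which your norm identity already accounts for), and the SPD branch is correct --- there Chebyshev even gives the sharper ratio $(\sqrt{\kappa}-1)/(\sqrt{\kappa}+1)$, which is dominated by $\rho$.

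The gap is in the indefinite branch, which is the one that actually matters here: $\mathcal{A}$ in~\eqref{eqn:2f-linear-stab} is a saddle-point operator, so $\mathcal{MA}$ has eigenvalues of both signs. Taking $q(\lambda)=p(\lambda^2)$ with $p$ the shifted, renormalized Chebyshev polynomial on the squared interval doubles the degree: with $m$ iterations available you may only take $\deg p=\lfloor m/2\rfloor$, and the construction delivers
\begin{equation*}
\min_{q\in\mathcal{P}_m,\;q(0)=1}\ \max_{\lambda\in\sigma(\mathcal{MA})}|q(\lambda)|\;\le\;2\left(\frac{\kappa-1}{\kappa+1}\right)^{\lfloor m/2\rfloor},
\end{equation*}
not $2\rho^{m}$. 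Your phrase ``obtaining the same bound'' hides this halving of the exponent, and it cannot be repaired: for a spectrum genuinely contained in two symmetric intervals the asymptotic per-step reduction of the minimax value is $\rho^{1/2}$, not $\rho$, so no degree-$m$ residual polynomial achieves $2\rho^{m}$ in general. The classical form of the theorem (as in Greenbaum, and in Elman--Silvester--Wathen) indeed carries the exponent $\lfloor m/2\rfloor$; the version printed in the paper elides this, and the discrepancy is immaterial for its purposes since only parameter-independence of the rate is used downstream. But as a proof of the literal inequality~\eqref{ine:pminres} in the indefinite setting, your argument establishes a weaker estimate than the one claimed, and you should either state the $\lfloor m/2\rfloor$ version or restrict the exponent-$m$ claim to the definite case.
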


% Naturally, we need to estimate the condition number $\kappa(\mathcal{MA})$ in order to study the preconditioner $\mathcal{M}$. 
In~\cite{Mardal.K;Winther.R2010}, Mardal and Winther show that, if the well-posedness conditions,~\eqref{ine:inf-sup-L}, hold, and $\mathcal{M}$ satisfies
\begin{equation} \label{def:norm-equiv}
c_1 \| \bm{x} \|_{\bm{X}}^2 \leq \| \bm{x} \|_{\mathcal{M}^{-1}}^2 \leq c_2 \| \bm{x} \|_{\bm{X}}^2,
\end{equation}
then, $\mathcal{A}$ and $\mathcal{M}$ are \emph{norm-equivalent} and
$\kappa(\mathcal{MA}) \leq \frac{c_2 \beta}{c_1 \gamma}$. This implies
that $\rho \leq \frac{c_2 \beta - c_1 \gamma}{ c_2 \beta + c_1
  \gamma}$.  Thus, if the original problem is well-posed and the
constants $c_1$ and $c_2$ are independent of the physical and
discretization parameters, then the convergence rate of preconditioned
MINRES is uniform, hence $\mathcal{M}$ is a robust preconditioner.  

\subsubsection{FOV-equivalent Preconditioner}
%  \textcolor{red}
% {Do we    need to define FOV?}{\color{magenta} Yes!}  \textcolor{blue}{The reason I did not include the definition of FOV is that includes complex numbers.  But for the definition of FOV-equivalent preconditoner, just real number is good enough. (15) is the definition of FOV-equivalent.}}  
\label{subsubsec:FOV-equiv}
%%%%%%%%%%%%%%%%%%%%%%%%%%
% {\color{magenta}
% To begin, we recall the definition of field of values (FOV) 
% for an operator $\mathcal{Z} : \bm{X}\mapsto \bm{X}$:
% \[
% \operatorname{FOV}(\mathcal{Z}) = \left\{
% \frac{\langle\mathcal{Z} \bm{x},\bm{x}\rangle_{\mathcal{M}^{-1}}}{\langle\bm{x},\bm{x} \rangle_{\mathcal{M}^{-1}}} 
% \;\;\bigg|\;\; \bm{x}\in \bm{X}\right
% \}
% \]
% }
In this section we consider the class of
field-of-values-equivalent (FOV-equivalent) preconditioners
$\mathcal{M}_L: \bm{X}' \mapsto \bm{X}$, for GMRES. We define the
notion of FOV-equivalence after the following classical theorem on the
convergence rate of the preconditioned GMRES method.
\begin{theorem}{\cite{Elman.H1982a,Eisenstat.S;Elman.H;Schultz.M1983b}}\label{thm:pgmres}
  If $\bm{x}^m$ is the $m$-th iteration of the GMRES method preconditioned with 
 $\mathcal{M}_L$ and $\bm{x}$
  is the exact solution, then
\begin{equation} \label{ine:pgmres}
\| \mathcal{M}_{L} \mathcal{A}( \bm{x} - \bm{x}^m) \|^2_{\mathcal{M}^{-1}}   \leq \left( 1 - \frac{\Sigma^{2}}{\Upsilon^{2}} \right)^{m}  \| \mathcal{M}_{L} \mathcal{A}( \bm{x}- \bm{x}^{0}) \|^2_{\mathcal{M}^{-1}},
\end{equation}
where, for any $\bm{x} \in \bm{X}$, 
\begin{equation}\label{def:FOV-l}
\Sigma \leq \frac{ (\mathcal{M}_{L} \mathcal{A}  \bm{x} ,\bm{x})_{\mathcal{M}^{-1}} }{(\bm{x}, \bm{x})_{\mathcal{M}^{-1}}}, 
	\quad
	\frac{ \| \mathcal{M}_{L} \mathcal{A} \bm{ x} \|_{\mathcal{M}^{-1}} }{\| \bm{x} \|_{\mathcal{M}^{-1}} } \leq \Upsilon.
\end{equation}
\end{theorem}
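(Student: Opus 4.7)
The plan is to interpret left-preconditioned GMRES as a residual-minimisation in the $(\cdot,\cdot)_{\mathcal{M}^{-1}}$ inner product and then bound the one-step contraction using the field-of-values parameters $\Sigma$ and $\Upsilon$. Writing the preconditioned residual $\bm{r}^k_{\text{pre}} := \mathcal{M}_L \mathcal{A}(\bm{x}-\bm{x}^k)$, the $m$-th iterate satisfies $\bm{r}^m_{\text{pre}} = p_m(\mathcal{M}_L \mathcal{A}) \bm{r}^0_{\text{pre}}$ for some polynomial $p_m$ of degree at most $m$ with $p_m(0)=1$, where $p_m$ is chosen to minimise $\|\bm{r}^m_{\text{pre}}\|_{\mathcal{M}^{-1}}$ over all such polynomials. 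This polynomial characterisation is exactly the standard GMRES optimality, transported into the weighted inner product naturally associated with the norm used in \eqref{ine:pgmres}.

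First I would reduce the $m$-step statement to an $m$-fold iteration of a one-step bound. For any $\tau \in \mathbb{R}$ the polynomial $(1 - \tau t)\, p^{\text{opt}}_{m-1}(t)$ is admissible of degree $m$, so the GMRES optimality gives
\begin{equation*}
\| \bm{r}^m_{\text{pre}} \|^2_{\mathcal{M}^{-1}} \leq \min_{\tau \in \mathbb{R}} \| (I - \tau \mathcal{M}_L \mathcal{A}) \bm{r}^{m-1}_{\text{pre}} \|^2_{\mathcal{M}^{-1}}.
\end{equation*}
It therefore suffices to prove the \emph{universal} one-step estimate: for every $\bm{y} \in \bm{X}$,
\begin{equation*}
\min_{\tau \in \mathbb{R}} \| (I - \tau \mathcal{M}_L \mathcal{A}) \bm{y} \|^2_{\mathcal{M}^{-1}} \leq \Bigl( 1 - \frac{\Sigma^2}{\Upsilon^2} \Bigr) \| \bm{y} \|^2_{\mathcal{M}^{-1}}.
\end{equation*}

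Next I expand the squared norm into a quadratic in $\tau$, whose minimum is attained at $\tau^{*} = (\mathcal{M}_L \mathcal{A} \bm{y}, \bm{y})_{\mathcal{M}^{-1}} / \| \mathcal{M}_L \mathcal{A} \bm{y}\|^2_{\mathcal{M}^{-1}}$ and equals
\begin{equation*}
\| \bm{y}\|^2_{\mathcal{M}^{-1}} \;-\; \frac{(\mathcal{M}_L \mathcal{A} \bm{y}, \bm{y})^2_{\mathcal{M}^{-1}}}{\|\mathcal{M}_L \mathcal{A} \bm{y}\|^2_{\mathcal{M}^{-1}}}.
\end{equation*}
Substituting the lower bound $(\mathcal{M}_L\mathcal{A}\bm{y},\bm{y})_{\mathcal{M}^{-1}} \geq \Sigma\|\bm{y}\|^2_{\mathcal{M}^{-1}}$ into the numerator and the upper bound $\|\mathcal{M}_L\mathcal{A}\bm{y}\|_{\mathcal{M}^{-1}} \leq \Upsilon\|\bm{y}\|_{\mathcal{M}^{-1}}$ into the denominator of the subtracted term produces exactly the factor $1 - \Sigma^2/\Upsilon^2$, as required. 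Iterating the resulting one-step inequality $m$ times and recognising $\bm{r}^0_{\text{pre}} = \mathcal{M}_L\mathcal{A}(\bm{x}-\bm{x}^0)$ on the right-hand side closes the argument.

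The main obstacle is conceptual rather than computational: to legitimise the iteration I need the one-step estimate as a \emph{universal} statement over $\bm{y}$, which is why the hypothesis \eqref{def:FOV-l} is stated for all $\bm{x} \in \bm{X}$ rather than only for the initial residual, and why $\Sigma > 0$ is essential (without it, the reduction factor collapses and GMRES can stagnate). A small housekeeping check along the way is that $(\cdot,\cdot)_{\mathcal{M}^{-1}}$ really is an inner product, which follows from $\mathcal{M}$ being SPD, and that $\mathcal{M}_L\mathcal{A}$ is a bounded operator on $(\bm{X}, \|\cdot\|_{\mathcal{M}^{-1}})$, which is precisely what the $\Upsilon$ bound in \eqref{def:FOV-l} asserts.
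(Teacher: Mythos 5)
Your proof is correct and is precisely the classical Elman/Eisenstat--Elman--Schultz argument: the paper itself gives no proof of this theorem (it is quoted from the cited references), and the one-step polynomial reduction followed by minimizing the quadratic in $\tau$ and inserting the field-of-values bounds is exactly how those references establish it. Your side remarks --- that GMRES must be run in the $(\cdot,\cdot)_{\mathcal{M}^{-1}}$ inner product for the optimality property to hold in that norm, and that $\Sigma>0$ is needed for the squared lower bound on the numerator to go the right way --- are the right points to flag.
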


If the constants $\Sigma$ and $\Upsilon$ are independent of the
physical and discretization parameters, then $\mathcal{M}_L$ is a
uniform left preconditioner for GMRES and is referred to as
an \emph{FOV-equivalent} preconditioner. In~\cite{Loghin.D;Wathen.A2004}, a block
lower triangular preconditioner has been shown to
satisfy~\eqref{def:FOV-l} based on the well-posedness
conditions,~\eqref{ine:inf-sup-L}, for Stokes/Navier-Stokes equations.
More recently, the same approach has been generalized to Maxwell's
equations~\cite{Adler.J;Hu.X;Zikatanov.L2017a} and
Magnetohydrodynamics~\cite{Ma.Y;Hu.K;Hu.X;Xu.J2016a}.

Similar arguments also apply to right preconditioners
  for GMRES,  $ \mathcal{M}_U: \bm{X}' \mapsto
    \bm{X}$, where % Note that $\mathcal{AM}_U: \bm{X}' \mapsto \bm{X}'$, we need to introduce an inner product $(\bm{x}', \bm{y}')_{\mathcal{M}} := \langle \mathcal{M} \bm{x}', \bm{y} \rangle$ on $\bm{X}'$ with induced norm $\| \bm{x}' \|_{\mathcal{M}}^2 := (\bm{x}', \bm{x}')_{\mathcal{M}}$.  Then
  the operators, $\mathcal{M}_U$ and $\mathcal{A}$, are FOV equivalent if, for any $\bm{x}' \in \bm{X}'$, 
\begin{equation} \label{def:FOV-r}
\Sigma \leq \frac{ ( \mathcal{A} \mathcal{M}_{U} \bm{x}', \bm{x}')_{\mathcal{M}} }{(\bm{x}', \bm{x}')_{\mathcal{M}} }, 
	\quad
	\frac{ \|  \mathcal{A} \mathcal{M}_{U}  \bm{x'} \|_{\mathcal{M}} }{\| \bm{x}' \|_{\mathcal{M}} } \leq \Upsilon.
\end{equation}
Again, if $\Sigma$ and $\Upsilon$ are independent of the physical and
discretization parameters, $\mathcal{M}_U$ is a uniform right
preconditioner for GMRES.  Such an approach leads to block upper
triangular preconditioners.

%%%%%%%%%%%%%%%%%%%%%%%%%%
\section{Robust Preconditioners for Biot's Model} \label{sec:prec}
%%%%%%%%%%%%%%%%%%%%%%%%%%
In this section, following the framework proposed
in~\cite{Loghin.D;Wathen.A2004, Mardal.K;Winther.R2010} and techniques
recently developed in~\cite{Ma.Y;Hu.K;Hu.X;Xu.J2016a}, we design block
diagonal and triangular preconditioners based on the well-posedness of
the discretized linear system at each time step.  First, we study the
well-posedness of the linear system~\eqref{eqn:2f-linear-stab}.  The
analysis here is similar to the analysis in%  our previous
% work
~\cite{Rodrigo.C;Gaspar.F;Hu.X;Zikatanov.L2016a}. However, we make
sure that the constants arising from the analysis are independent of
any physical and discretization parameters%, so that preconditioners are robust with respect to these parameters as well
. 

The choice of finite-element spaces give $\bm{X} = \bm{V}_h \times
Q_h$, and the finite-element pair satisfies the following inf-sup condition,
\begin{equation}\label{def:weak-inf-sup}
\sup_{\bm{v} \in \bm{V}_h} \frac{(\ddiv \bm{v}, q)}{\| \bm{v} \|_1} \geq \gamma^0_B \| q \| - \xi^0 h \| \nabla q \|, \quad \forall \, q \in Q_h.
\end{equation}
Here, $\gamma_B^0 > 0$ and $\xi^0 \geq 0$ are constants that do not
depend on the mesh size.  Moreover, if we use the Mini-element, $\xi^0
= 0$.  

For $\bm{x} = (\bm{u}, p)^T$, we define the following norm,
\begin{equation} \label{def:norm-stab}
\| \bm{x} \|^2_{\bm{X}} := \| \bm{u} \|^2_{A_{\bm{u}}} + \tau \| p \|_{A_p}^2 + \eta h^2 \| p \|_{L_p}^2 + \frac{\alpha^2}{\zeta^2} \| p \|^2,
\end{equation}
where $\| \bm{u} \|^2_{A_{\bm{u}}} := a(\bm{u}, \bm{u})$, $\| p
\|^2_{A_p} := a_p( \nabla p, \nabla p)$, $\| p \|^2_{L_p} := (\nabla
p, \nabla p)$, $\zeta = \sqrt{\lambda + \frac{2\mu}{d}}$, and $d=2$ or $3$ is
the dimension of the problem.  % The term $\eta h^2 \| p \|^2_{L_p}$
% appears due to the stabilization and 
With $\zeta$ defined above,
we have $\| \bm v \|_{A_{\bm u}} \leq \sqrt{d} \zeta \| \bm v \|_1$,
and can reformulate the inf-sup condition,~\eqref{def:weak-inf-sup}, as follows,
\begin{equation}\label{ine:weak-inf-sup-mod}
\sup_{\bm v \in \bm V_h} \frac{(B \bm v, q)}{ \| \bm v \|_{A_{\bm u}}} \geq \sup_{\bm v \in \bm V_h} \frac{(B \bm v, q)}{\sqrt{d} \zeta \| \bm v \|_1} \geq \frac{\gamma_B^0}{\sqrt{d} \zeta} \| q \| - \frac{\xi^0}{\sqrt{d} \zeta} h \| \nabla q \| =: \frac{\gamma_B}{\zeta} \| q \| - \frac{\xi}{\zeta} h \| \nabla q \|,
\end{equation}
where $\gamma_B := \gamma_B^0/\sqrt{d}$ and $\xi = \xi^0/\sqrt{d}$.  

Noting that for $d = 2, 3$,
%\begin{equation*}
$
2\mu (\varepsilon(\bm v), \varepsilon(\bm v)) \leq a(\bm v, \bm v) \leq (2\mu + d\lambda) (\varepsilon(\bm v), \varepsilon(\bm v))$.
%\end{equation*} 
Thus,  $(\ddiv \bm v, \ddiv \bm v) \leq d  (\varepsilon(\bm v), \varepsilon(\bm v))$ and, 
\begin{equation}\label{ine:B-upper}
\zeta^2 \| B \bm v \|^2 = (\lambda + \frac{2 \mu}{d}) \| \ddiv \bm v \|^2 \leq \| \bm v \|^2_{A_{\bm u}} \Longrightarrow 
\| B \bm v \| \leq \frac{1}{\zeta} \| \bm v \|_{A_{\bm u}}.
\end{equation}
This allows us to show that linear system~\eqref{eqn:2f-linear-stab} is well-posed.
\begin{theorem}\label{thm:2f-stab-well-pose}
For $\bm{x} = (\bm{u},p) $ and $\bm{y} = (\bm{v},q)$, let
\begin{equation}\label{def:bilinear-L-stab}
\mathcal{L} (\bm{x}, \bm{y})= (A_{\bm u} \bm{u}, \bm{v}) + \alpha (B \bm{v}, p) + \alpha (B \bm{u}, q) - \tau (K \nabla p, \nabla q) - \eta h^2 (\nabla p, \nabla q).
\end{equation}
Then, \eqref{ine:inf-sup-L} holds and $\mathcal{A}$ defined in~\eqref{eqn:2f-linear-stab} is an isomorphism from $\bm{X}$ to $\bm{X}'$ provided the stabilization parameter $\eta = \delta \frac{\alpha^2 }{\zeta^2}$ with $\delta > 0$.  Moreover, the constants $\gamma$ and $\beta$ are independent of the physical and discretization parameters.
\end{theorem}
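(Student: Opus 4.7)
The strategy is to verify the two conditions in~\eqref{ine:inf-sup-L} for the symmetric bilinear form $\mathcal{L}$ with constants $\beta$ and $\gamma$ independent of all of $\lambda,\mu,\tau,K,h,\alpha$. Boundedness is the easy half: each of $(A_{\bm u}\bm u,\bm v)$, $\tau(K\nabla p,\nabla q)$, and $\eta h^2(\nabla p,\nabla q)$ is bounded by Cauchy-Schwarz directly in terms of summands of $\|\cdot\|_{\bm X}$, while the off-diagonal coupling is controlled using the key estimate $\|B\bm v\|\leq \zeta^{-1}\|\bm v\|_{A_{\bm u}}$ from~\eqref{ine:B-upper}, which turns $\alpha(B\bm v,p)\leq \|\bm v\|_{A_{\bm u}}\cdot(\alpha/\zeta)\|p\|$ into a product of quantities appearing in $\|\cdot\|_{\bm X}$. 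This yields $\beta$ as an absolute constant.

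For the inf-sup direction, the naive test $\bm y=(\bm u,-p)$ gives only $\|\bm u\|_{A_{\bm u}}^2+\tau\|p\|_{A_p}^2+\eta h^2\|p\|_{L_p}^2$ and misses the $(\alpha^2/\zeta^2)\|p\|^2$ component of $\|\bm x\|_{\bm X}^2$. The plan is to enrich the test to $\bm y=(\bm u+\theta\bm w,-p)$, where $\bm w\in\bm V_h$ is selected from the modified inf-sup bound~\eqref{ine:weak-inf-sup-mod} and scaled so that $\|\bm w\|_{A_{\bm u}}=(\alpha/\zeta)\|p\|$. With this choice,
\begin{equation*}
\alpha\theta(B\bm w,p)\ \geq\ \frac{\alpha^2\gamma_B\theta}{\zeta^2}\|p\|^2 \ -\ \frac{\alpha^2\xi\theta}{\zeta^2}\,h\|p\|\|\nabla p\|,
\end{equation*}
while Young's inequality on the spurious cross term $\theta(A_{\bm u}\bm u,\bm w)$ absorbs half of $\|\bm u\|_{A_{\bm u}}^2$ and generates only a harmless $\theta^2(\alpha^2/\zeta^2)\|p\|^2$ contribution that can be dominated by the positive $\gamma_B\theta$ term above once $\theta$ is chosen small.

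The main obstacle is the negative $h\|p\|\|\nabla p\|$ term, which is absent for the Mini element (where $\xi=0$) but unavoidable for $\mathbb{P}_1$-$\mathbb{P}_1$. This is exactly the point where the hypothesis $\eta=\delta\alpha^2/\zeta^2$ enters: a second Young's inequality splits that term into a fraction of $(\alpha^2/\zeta^2)\|p\|^2$, which is absorbed by the $\gamma_B\theta$ contribution, and a fraction of $(\alpha^2/\zeta^2)h^2\|\nabla p\|^2$, which is absorbed by $\eta h^2\|p\|_{L_p}^2$ precisely because $\eta$ is proportional to $\alpha^2/\zeta^2$. Choosing $\theta$ small enough, depending only on the absolute constants $\gamma_B$, $\xi$, and $\delta$, yields $\mathcal{L}(\bm x,\bm y)\geq C_1\|\bm x\|_{\bm X}^2$. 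A triangle inequality applied to $\|\bm u+\theta\bm w\|_{A_{\bm u}}$, together with the normalization $\|\bm w\|_{A_{\bm u}}^2=(\alpha/\zeta)^2\|p\|^2\leq\|\bm x\|_{\bm X}^2$, gives $\|\bm y\|_{\bm X}\leq C_2\|\bm x\|_{\bm X}$, whence $\gamma=C_1/C_2>0$ is parameter-robust. Boundedness plus inf-sup on the finite-dimensional spaces then delivers the claimed isomorphism.
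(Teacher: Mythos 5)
Your proposal is correct and follows essentially the same route as the paper: the test function $\bm y=(\bm u+\theta\bm w,-p)$ with $\bm w$ supplied by the inf-sup condition \eqref{ine:weak-inf-sup-mod}, absorption of the cross terms, and the observation that $\eta=\delta\alpha^2/\zeta^2$ is exactly what lets the stabilization term absorb the $h\|p\|\,\|\nabla p\|$ remainder. The only cosmetic differences are that you place the $\alpha/\zeta$ scaling in the normalization of $\bm w$ rather than in $\theta$, and you run successive Young's inequalities where the paper verifies positive definiteness of a single $4\times4$ matrix.
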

\begin{proof}
Based on the inf-sup condition~\eqref{def:weak-inf-sup} and~\eqref{ine:weak-inf-sup-mod}, for any $p$, there exist $\bm{w} \in \bm{V}_h$ such that $(B \bm{w}, p) \geq \left( \frac{\gamma_B}{\zeta} \| p \|  - \frac{\xi}{\zeta} h \| \nabla p \| \right) \|\bm{w} \|_{A_{\bm u}}$ and $\| \bm{w} \|_{A_{\bm u}} = \| p \|$. For given $(\bm{u}, p) \in \bm{V}_h \times Q_h$, we choose $\bm{v} = \bm{u} + \theta  \bm{w}$, $\theta =  \vartheta \frac{\gamma_B \alpha}{\zeta}$ and $q = -p$ and then have,
\begin{align*}
\mathcal{L} (\bm{x}, \bm{y})  &= (A_{\bm u} \bm{u}, \bm{u} + \theta \bm{w}) + \alpha (B(\bm{u} + \theta \bm{w}), p) - \alpha (B \bm{u}, p) \\
&\quad + \tau (K \nabla p, \nabla p) + \eta h^2 (\nabla p, \nabla p) \\
& \geq \| \bm u \|^2_{A_{\bm u}}  - \vartheta \| \bm u \|_{A_{\bm u}} \frac{\gamma_B \alpha}{\zeta} \| p \| + \vartheta \frac{\gamma_B^2 \alpha^2}{\zeta^2} \| p \|^2 - \vartheta \frac{\gamma_B \alpha^2}{\zeta^2} \xi h \| \nabla p \| \| p \|  \\
& \quad + \tau \| p \|^2_{A_p} + \frac{\delta}{\xi^2} \frac{\alpha^2 }{\zeta^2} \xi^2 h^2 \| \nabla p \|^2 \\
& \geq 
\begin{pmatrix}
\| \bm u \|_{A_{\bm u}} \\
\frac{\gamma_B \alpha}{\zeta} \| p \| \\
\frac{\alpha}{\zeta} \xi h \| \nabla p \| \\
\sqrt{\tau} \| p \|_{A_p} 
\end{pmatrix}^T
\begin{pmatrix}
1  &  - \vartheta /2 &  0  &  0 \\
- \vartheta/2 & \vartheta & - \vartheta/2 & 0 \\
0  &  - \vartheta / 2 &  \delta/\xi^2 & 0 \\
0  &  0  &  0   & 1
\end{pmatrix}
\begin{pmatrix}
\| \bm u \|_{A_{\bm u}} \\
\frac{\gamma_B \alpha}{\zeta} \| p \| \\
\frac{\alpha}{\zeta} \xi h \| \nabla p \| \\
\sqrt{\tau} \| p \|_{A_p} 
\end{pmatrix}.
\end{align*}
If $0 < \vartheta < \min\{ 2, \frac{2\delta}{\xi^2} \}$, the matrix in the middle is SPD and there exists $\gamma_0$ such that
\begin{align*}
\mathcal{L} (\bm{x}, \bm{y}) & \geq \gamma_0 \left(  \| \bm u \|^2_{A_{\bm u}} + \frac{\gamma^2_B \alpha^2}{\zeta^2} \| p \|^2  +  \frac{\alpha^2}{\zeta^2} \xi^2 h^2 \| \nabla p \|^2  +  \tau \| p \|^2_{A_p}   \right) \geq \tilde{\gamma}  \| \bm x \|_{\bm X}^2,
\end{align*}
where $\tilde{\gamma} = \gamma_0 \min\{  \gamma_B^2,  \xi^2/\delta \}
$.  Also, it is straightforward to verify $\| (\bm{v}, q) \|_{\bm X}^2
\leq \bar{\gamma}^2 \| (\bm{u}, p) \|_{\bm X}^2$, and the boundedness
of $\mathcal{L}$ by continuity of each term and the Cauchy-Schwarz inequality. Therefore, $\mathcal{L}$ satisfies~\eqref{ine:inf-sup-L} with $\gamma = \tilde{\gamma}/\bar{\gamma}$.
\end{proof} 

\begin{remark}
Note that the choice of $\zeta = \sqrt{\lambda+ 2
    \mu/d}$ is essential to the proof, but is consistent with
  previous practical choice in implementations
  \cite{Aguilar2008,Rodrigo.C;Gaspar.F;Hu.X;Zikatanov.L2016a}.
  Additionally, choosing \emph{any} $\delta > 0$ is sufficient to show
  the well-posedness of the stabilized discretization.  However, for
  eliminating non-physical oscillations of the pressure approximation
  seen in practice~\cite{Aguilar2008}, this is not sufficient, and
  $\delta$ should be sufficiently large. For example, in 1D, we choose
  $\delta = 1/4$.

% From the analysis, we need to choose the stabilization pamameter properly, i.e., $\eta = \delta \frac{\alpha^2}{\zeta^2}$ with any $\delta > 0$ and the choice $\zeta = \sqrt{\lambda+ 2 \mu/d}$ is actually essential.  We want to point out that this is also the practical choice suggested in~\cite{Aguilar2008,Rodrigo.C;Gaspar.F;Hu.X;Zikatanov.L2016a}.  We also want to point out that, $\delta > 0$ is sufficient to show the well-posedness of the stabilized discretization.  However, for eliminating the non-physical oscillations of the pressure approximation, $\delta > 0$ is not sufficient as shown in~\cite{Aguilar2008}.  In fact, $\delta$ should be sufficiently large. For example, in 1D, we should use $\delta = 1/4$.
\end{remark}

%%%%%%%%%%%%%%%%%%%%%%%%%%
\subsection{Block Diagonal Preconditioner} \label{subsec:block-diag}
%%%%%%%%%%%%%%%%%%%%%%%%%%
Now that we have shown \eqref{ine:inf-sup-L} and that the system is
well-posed, we find SPD operators such that~\eqref{def:norm-equiv} is satisfied.  One natural choice is the Reisz operator corresponding to the inner product $(\cdot, \cdot)_{\bm{X}}$,
%\begin{equation} %\label{def:Riesz-operator}
$
(\mathcal{B} \bm{f}, \bm{x})_{\bm{X}} = \langle \bm{f}, \bm{x} \rangle, \ \forall \bm{f} \in \bm{X}', \ \bm{x} \in \bm{X}. 
$
%\end{equation}
For the two-field stabilized discretization and the norm $\| \cdot
\|_{\bm{X}}$ defined in~\eqref{def:norm-stab}, we get
\begin{equation} \label{def:2f-diag-prec-stab}
\mathcal{B}_D  = 
\begin{pmatrix}
A_{\bm u} & 0 \\
0  &  \tau A_p + \eta h^2 L_p + \frac{\alpha^2}{\zeta^2} M
\end{pmatrix}^{-1},
\end{equation}
where $M$ is the mass matrix of the pressure block.  Since
$\mathcal{B}_D$ satisfies the norm-equivalent condition with $c_1 =
c_2 = 1$, by Theorem~\ref{thm:2f-stab-well-pose}, we have $\kappa(\mathcal{B} _D \mathcal{A} ) = \mathcal{O}(1)$. 

In practice, applying the preconditioner $\mathcal{B}_D$ involves the action of inverting the diagonal blocks exactly, which is very expensive and infeasible.  Therefore, we replace the diagonal blocks by their spectral equivalent SPD approximations,
\begin{equation*}\label{def:2f-diag-inexact-prec-stab}
\mathcal{M}_D =
\begin{pmatrix}
H_{\bm u} & 0 \\
0 & H_p
\end{pmatrix},
\end{equation*}
where
\begin{align}
& c_{1,\bm u} (H_{\bm u} \bm u, \bm u) \leq (A_{\bm u}^{-1} \bm{u}, \bm{u}) \leq c_{2, \bm u} (H_{\bm u} \bm u, \bm u) \label{ine:Hu-stab} \\
& c_{1,p} (H_{p} p, p) \leq ((\tau A_p +  \eta h^2 L_p+ \frac{\alpha^2}{\zeta^2} M)^{-1} p, p) \leq c_{2, p} (H_{p} p,p). \label{ine:Hp-stab}
\end{align}
Again, $\mathcal{M}_D$ and $\mathcal{A}$ are norm-equivalent and $\kappa(\mathcal{M}_D \mathcal{A} ) = \mathcal{O}(1)$ by Theorem~\ref{thm:2f-stab-well-pose}.

%%%%%%%%%%%%%%%%%%%%%%%%%%
\subsection{Block Triangular Preconditioners} \label{subsec:block-tri}
%%%%%%%%%%%%%%%%%%%%%%%%%%
Next, we consider block triangular preconditioners for the stabilized
scheme, $\mathcal{A}$. For simplicity of the analysis, we modify $\mathcal{A}$ slightly by negating
the second equation.%, but still denote it by $\mathcal{A}$. 

We consider two kinds of block triangular preconditioners, 
\begin{equation}\label{def:2f-stab-lower-prec}
\mathcal{B}_L = 
\begin{pmatrix}
A_{\bm u} & 0 \\
-\alpha B & \tau A_p + \eta h^2 L_p + \frac{\alpha^2}{\zeta^2} M
\end{pmatrix}^{-1}
\ \text{and} \
\mathcal{M}_L =
\begin{pmatrix}
H_{\bm u}^{-1} & 0 \\
-\alpha B & H_p^{-1}
\end{pmatrix}^{-1},
\end{equation}
and block upper triangular preconditioners,
\begin{equation}\label{def:2f-stab-upper-prec}
\mathcal{B}_U = 
\begin{pmatrix}
A_{\bm u} & \alpha B^T \\
0 & \tau A_p + \eta h^2 L_p + \frac{\alpha^2}{\zeta^2} M
\end{pmatrix}^{-1}
\ \text{and} \
\mathcal{M}_U =
\begin{pmatrix}
H_{\bm u}^{-1} & \alpha B^T \\
0 & H_p^{-1}
\end{pmatrix}^{-1}.
\end{equation}

According to Theorem~\ref{thm:pgmres}, we need to show those block preconditioners satisfies the FOV-equivalence,~\eqref{def:FOV-l} and~\eqref{def:FOV-r}.  We first consider the block lower triangular preconditioner, $\mathcal{B}_L$.  %Their proof are similar with Theorem 4.3 and 4.4 in~\cite{Ma.Y;Hu.K;Hu.X;Xu.J2016a}. 

\begin{theorem} \label{thm:2f-stab-BL}
There exists constants $\Sigma$ and $\Upsilon$, independent of
discretization or physical parameters, such that, for any $\bm{x} = (\bm u, p)^T \neq \bm{0}$, 
\begin{equation*}
\Sigma \leq \frac{(\mathcal{B}_L \mathcal{A} \bm{x}, \bm{x})_{(\mathcal{B}_D)^{-1}}}{(\bm x, \bm x)_{(\mathcal{B}_D)^{-1}}}, \ 
\frac{\| \mathcal{B}_L \mathcal{A} \bm x \|_{(\mathcal{B}_D)^{-1}} }{\| \bm{x} \|_{(\mathcal{B}_D)^{-1}}} \leq \Upsilon,
\end{equation*}
provided that $\eta = \delta
\frac{\alpha^2}{\zeta^2}$ with $\delta > 0$. 
\end{theorem}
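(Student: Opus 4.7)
The plan is to verify the two field-of-values conditions~\eqref{def:FOV-l} directly with respect to the $(\mathcal{B}_D)^{-1}$ inner product. Writing $S := \tau A_p + \eta h^2 L_p + \tfrac{\alpha^2}{\zeta^2} M$ for the $(2,2)$-block of $\mathcal{B}_D^{-1}$ and $\tilde S := \tau A_p + \eta h^2 L_p$ for the pressure block of $\mathcal{A}$ after negation, a direct block computation gives
\[
\mathcal{B}_L \mathcal{A} = \begin{pmatrix} I & \alpha A_{\bm u}^{-1} B^T \\ 0 & S^{-1}(\tilde S + \alpha^2 B A_{\bm u}^{-1} B^T) \end{pmatrix},
\]
so that for $\bm x = (\bm u, p)^T$,
\[
(\mathcal{B}_L \mathcal{A}\bm x, \bm x)_{\mathcal{B}_D^{-1}} = \|\bm u\|_{A_{\bm u}}^2 + \alpha(B\bm u, p) + \|p\|_{\tilde S}^2 + \alpha^2 \|B^T p\|_{A_{\bm u}^{-1}}^2.
\]

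\textbf{Lower bound.} To produce $\Sigma$, I would absorb the cross term via Young's inequality with a parameter $\tau_0 \in (1/4, 1)$,
\[
\alpha(B\bm u, p) \geq -\tau_0 \|\bm u\|_{A_{\bm u}}^2 - \tfrac{\alpha^2}{4\tau_0} \|B^T p\|_{A_{\bm u}^{-1}}^2,
\]
leaving a positive coefficient $1 - 1/(4\tau_0) > 0$ in front of the Schur term. I would then square the modified inf-sup condition~\eqref{ine:weak-inf-sup-mod} (which reads $\|B^T p\|_{A_{\bm u}^{-1}} \geq \tfrac{\gamma_B}{\zeta}\|p\| - \tfrac{\xi}{\zeta} h\|\nabla p\|$) and use the elementary inequality $(a-b)^2 \geq a^2/2 - b^2$ to obtain
\[
\alpha^2 \|B^T p\|_{A_{\bm u}^{-1}}^2 \geq \tfrac{\gamma_B^2 \alpha^2}{2\zeta^2} \|p\|^2 - \tfrac{\xi^2 \alpha^2}{\zeta^2} h^2 \|\nabla p\|^2,
\]
and absorb the negative $h^2 \|\nabla p\|^2$ contribution into the stabilization $\eta h^2 \|\nabla p\|^2 = \delta \tfrac{\alpha^2}{\zeta^2} h^2 \|\nabla p\|^2$. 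For any fixed $\delta > 0$, choosing $\tau_0$ sufficiently close to $1/4$ (in a way depending on $\delta, \xi, \gamma_B$ only) keeps the net coefficient on $h^2\|\nabla p\|^2$ nonnegative, yielding $(\mathcal{B}_L \mathcal{A} \bm x, \bm x)_{\mathcal{B}_D^{-1}} \geq \Sigma\,(\bm x, \bm x)_{\mathcal{B}_D^{-1}}$.

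\textbf{Upper bound.} For $\Upsilon$ I would bound the two blocks of $\mathcal{B}_L \mathcal{A}\bm x$ separately, using~\eqref{ine:B-upper} in the form $\|B^T p\|_{A_{\bm u}^{-1}} \leq \tfrac{1}{\zeta}\|p\|$. The first block gives $\|\bm u + \alpha A_{\bm u}^{-1} B^T p\|_{A_{\bm u}}^2 \leq 2\|\bm u\|_{A_{\bm u}}^2 + 2\tfrac{\alpha^2}{\zeta^2}\|p\|^2 \leq 2\|\bm x\|_{\mathcal{B}_D^{-1}}^2$. For the second block, the same estimate implies $\alpha^2 B A_{\bm u}^{-1} B^T \preceq \tfrac{\alpha^2}{\zeta^2} M \preceq S$, so $\tilde S + \alpha^2 B A_{\bm u}^{-1} B^T \preceq 2 S$; a standard spectral argument (considering $S^{-1/2}(\tilde S + \alpha^2 B A_{\bm u}^{-1} B^T)S^{-1/2}$) then shows that the $S$-operator norm of $S^{-1}(\tilde S + \alpha^2 B A_{\bm u}^{-1} B^T)$ is at most $2$. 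Summing the two contributions produces a uniform $\Upsilon$.

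\textbf{Main obstacle.} The delicate step is the lower bound: the stabilized inf-sup~\eqref{ine:weak-inf-sup-mod} carries the defect $-\xi h\|\nabla p\|$ that must be compensated by the stabilization term, while Young's inequality simultaneously consumes part of the Schur-complement contribution that is needed to produce the $\tfrac{\alpha^2}{\zeta^2}\|p\|^2$ norm term. The balance is governed by a single parameter $\tau_0$ that must be tuned to both $\delta$ and $\xi$; the proof exploits that the argument passes through for every $\delta > 0$ (and trivially when $\xi = 0$, e.g.\ for the Mini element), with constants depending only on $\gamma_B, \xi, \delta$ and independent of $h, \tau, \alpha, \lambda, \mu, K$.
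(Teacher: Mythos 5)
Your proof follows essentially the same route as the paper's: the same block factorization of $\mathcal{B}_L\mathcal{A}$, Young's inequality on the cross term, the dual form of the inf-sup condition to bound $\alpha^2\|B^Tp\|^2_{A_{\bm u}^{-1}}$ from below, and absorption of the $-\xi h\|\nabla p\|$ defect into the stabilization term (the paper tunes the splitting parameter $\theta$ in $(a-b)^2\geq(1-\theta)a^2+(1-1/\theta)b^2$ rather than the Young parameter $\tau_0$, but the bookkeeping is equivalent). One small point of care: the surviving coefficient of $h^2\|\nabla p\|^2$ must be a fixed positive fraction of $\eta h^2$, not merely nonnegative, so that the full $(\mathcal{B}_D)^{-1}$-norm is recovered on the right-hand side; your tuning of $\tau_0$ (e.g.\ so that $(1-\tfrac{1}{4\tau_0})\xi^2\leq\delta/2$) delivers this with no extra work.
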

\begin{proof}
By direct computation, we have
\begin{align*}
(\mathcal{B}_L \mathcal{A}\bm x, \bm x)_{(\mathcal{B}_D)^{-1}} & =
(\bm u, \bm u)_{A_{\bm u}} + \alpha (B^T p, \bm{u}) + \tau (p,
p)_{A_p}  \\
& \quad + \eta h^2 (L_p p, p) + \alpha^2 (B A_{\bm u}^{-1} B^T p, p) \\
& \geq \Sigma_0 \left(  \| \bm u \|_{A_{\bm u}}^2 + \tau \|p\|_{A_p}^2 + \eta h^2 \| p \|_{L_p}^2+ \alpha^2 \| B^T p \|^2_{A_{\bm u}^{-1}}   \right).
\end{align*}
Note that, due to the inf-sup condition~\eqref{def:weak-inf-sup}, 
\begin{equation*}
 \| B^T p\|_{A_{\bm u}^{-1}} = \sup_{\bm v} \frac{(B \bm v, p)}{\| \bm v \|_{A_{\bm u}}} \geq \frac{\gamma_B}{\zeta} \| p \| - \frac{\xi}{\zeta} h \| \nabla p \|.
\end{equation*}
Therefore, since $\eta = \delta \frac{\alpha^2}{\zeta^2}$ with $\delta > 0$ and by choosing $  \frac{1}{1+ \delta / \xi^2} < \theta < 1$, we have,
\begin{align*}
(\mathcal{B}_L \mathcal{A} \bm x, \bm x)_{(\mathcal{B}_D)^{-1}}  & \geq  \Sigma_0 \left[  \| \bm u \|_{A_{\bm u}}^2 + \tau \|p\|_{A_p}^2 + \eta h^2 \| p \|_{L_p}^2 \right. \\
& \quad \left. + \alpha^2 \left( \frac{\gamma_B}{\zeta} \| p \| - \frac{\xi}{\zeta} h \| \nabla p \|  \right)^2    \right] \\
& \geq  \Sigma_0 \left[  \| \bm u \|_{A_{\bm u}}^2 + \tau \|p\|_{A_p}^2  \right. \\ 
& \quad \left. + (1 - \theta) \frac{\gamma_B^2 \alpha^2}{\zeta^2} \| p \|^2 + \left( 1 + \frac{\delta}{\xi^2} - \frac{1}{\theta}  \right) \frac{\alpha^2}{\zeta^2} \xi^2 h^2 \| \nabla p \|^2  \right] \\
& \geq \Sigma_0 \Sigma_1 \left(  \| \bm u \|_{A_{\bm u}}^2 + \tau \|p\|_{A_p}^2 + \frac{\alpha^2}{\zeta^2} h^2 \| p \|_{L_p}^2+ \frac{\alpha^2}{\zeta^2} \| p \|^2  \right) \\
& =: \Sigma (\bm x, \bm x)_{(\mathcal{B}_D)^{-1}},
\end{align*}
where $\Sigma_1 := \min\{1, (1-\theta) \gamma_B^2,  \left( 1 + \frac{\delta}{\xi^2} - \frac{1}{\theta}  \right) \frac{\xi^2}{\delta}  \} $.  This gives the lower bound.  The upper bound $\Upsilon$ can be obtain directly from the continuity of each term, the Cauchy-Schwarz inequality, and the fact that $\| B^T p\|_{A_{\bm u}^{-1}} \leq \frac{1}{\zeta} \| p \|$ obtained by~\eqref{ine:B-upper}.
 \end{proof}

Similarly, we can show that the other three block preconditioners are
also FOV-equivalent with $\mathcal{A}$ and, therefore, can be used as
preconditioners for GMRES.  Due to the length constraint of
this paper and the fact that the proofs are similar, we only state the results here.
\begin{theorem} \label{thm:2f-stab-ML}
If the conditions~\eqref{ine:Hu-stab} and~\eqref{ine:Hp-stab} hold and
$\| I - H_{\bm u} A_{\bm u} \|_{A_{\bm u}} \leq \rho$ with $0 \leq
\rho <1$, and there exists constants $\Sigma$ and $\Upsilon$, independent of
discretization or physical parameters, such that, for any $\bm{x} = (\bm u, p)^T \neq \bm{0}$, we have
\begin{equation*}
\Sigma \leq \frac{(\mathcal{M}_L \mathcal{A}\bm{x}, \bm{x})_{(\mathcal{M}_D)^{-1}}}{(\bm x, \bm x)_{(\mathcal{M}_D)^{-1}}}, \ 
\frac{\| \mathcal{M}_L \mathcal{A} \bm x \|_{(\mathcal{M}_D)^{-1}} }{\| \bm{x} \|_{(\mathcal{M}_D)^{-1}}} \leq \Upsilon,
\end{equation*}
provided that $\eta = \delta \frac{\alpha^2}{\zeta^2}$ with $\delta > 0$. 
\end{theorem}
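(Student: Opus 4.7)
The plan is to follow the template of the proof of Theorem~\ref{thm:2f-stab-BL} while carefully tracking the perturbation caused by replacing the exact block inverses with the spectrally equivalent surrogates $H_{\bm{u}}$ and $H_p$. First, direct block inversion gives
\[
\mathcal{M}_L = \begin{pmatrix} H_{\bm{u}} & 0 \\ \alpha H_p B H_{\bm{u}} & H_p \end{pmatrix},
\]
and computing $\mathcal{M}_L \mathcal{A}$ and pairing against $\bm{x}$ in the $\mathcal{M}_D^{-1}$-inner product (where $\mathcal{M}_D^{-1} = \mathrm{diag}(H_{\bm{u}}^{-1}, H_p^{-1})$) cancels the outer $H_{\bm{u}}, H_p$ factors, leaving
\[
(\mathcal{M}_L \mathcal{A} \bm{x}, \bm{x})_{\mathcal{M}_D^{-1}} = \| \bm{u} \|^2_{A_{\bm{u}}} + \alpha (B H_{\bm{u}} A_{\bm{u}} \bm{u}, p) + \alpha^2 (B H_{\bm{u}} B^T p, p) + \tau \| p \|^2_{A_p} + \eta h^2 \| p \|^2_{L_p}.
\]
This matches the expansion in the proof of Theorem~\ref{thm:2f-stab-BL}, except that the identity in the cross term is replaced by $E := H_{\bm{u}} A_{\bm{u}}$ and the $A_{\bm{u}}^{-1}$ in the Schur-like term is replaced by $H_{\bm{u}}$.

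The second step is to exploit that $E$ is $A_{\bm{u}}$-symmetric with $A_{\bm{u}}$-spectrum in $[1-\rho, 1+\rho]$, which is exactly the content of the hypothesis $\|I - H_{\bm{u}} A_{\bm{u}}\|_{A_{\bm{u}}} \leq \rho$. Rewriting the cross term as $\alpha (H_{\bm{u}}^{1/2} A_{\bm{u}} \bm{u}, H_{\bm{u}}^{1/2} B^T p)$ and using $\| H_{\bm{u}}^{1/2} A_{\bm{u}} \bm{u} \|^2 = (E \bm{u}, \bm{u})_{A_{\bm{u}}} \leq (1+\rho) \| \bm{u} \|^2_{A_{\bm{u}}}$, Cauchy--Schwarz gives
\[
|\alpha (B H_{\bm{u}} A_{\bm{u}} \bm{u}, p)| \leq \alpha \sqrt{1+\rho}\, \| \bm{u} \|_{A_{\bm{u}}} \| H_{\bm{u}}^{1/2} B^T p \|.
\]
Combined with $\alpha^2 (B H_{\bm{u}} B^T p, p) = \alpha^2 \| H_{\bm{u}}^{1/2} B^T p \|^2$, the first three terms are bounded below by a $2 \times 2$ quadratic form in $(\| \bm{u} \|_{A_{\bm{u}}}, \alpha \| H_{\bm{u}}^{1/2} B^T p \|)$ whose smallest eigenvalue equals $1 - \tfrac{1}{2}\sqrt{1+\rho}$, strictly positive for any $\rho < 3$ and hence for the assumed range $\rho < 1$. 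From here the proof follows the template of Theorem~\ref{thm:2f-stab-BL}: apply~\eqref{ine:Hu-stab} to get $\| H_{\bm{u}}^{1/2} B^T p \|^2 \geq c_{2,\bm{u}}^{-1} \| B^T p \|^2_{A_{\bm{u}}^{-1}}$, invoke the inf-sup inequality~\eqref{ine:weak-inf-sup-mod}, and choose a splitting parameter $\theta \in (0,1)$ close enough to $1$ so that the stabilization contribution $\eta h^2 \| p \|^2_{L_p} = \delta (\alpha^2/\zeta^2) h^2 \|\nabla p\|^2$ absorbs the negative $h^2 \|\nabla p\|^2$ remainder. The resulting lower bound is a positive multiple of $\| \bm{x} \|^2_{\bm{X}}$, which \eqref{ine:Hu-stab} and \eqref{ine:Hp-stab} show to be spectrally equivalent to $(\bm{x}, \bm{x})_{\mathcal{M}_D^{-1}}$, producing $\Sigma$.

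For the upper bound, I would expand $(\mathcal{M}_L \mathcal{A} \bm{x}, \bm{y})_{\mathcal{M}_D^{-1}}$ for a generic test vector $\bm{y} = (\bm{v}, q)$, bound each of its five resulting terms by Cauchy--Schwarz using $\| H_{\bm{u}}^{1/2} A_{\bm{u}} \bm{u} \| \leq \sqrt{1+\rho}\, \| \bm{u} \|_{A_{\bm{u}}}$ for the perturbed term and $\| B^T p \|_{A_{\bm{u}}^{-1}} \leq \zeta^{-1} \| p \|$ from~\eqref{ine:B-upper} elsewhere, and then take the supremum in the $\mathcal{M}_D^{-1}$-norm; the residual $H_{\bm{u}}, H_p$ constants are absorbed via \eqref{ine:Hu-stab} and \eqref{ine:Hp-stab} into $\Upsilon$.

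The main obstacle is keeping the cross-term estimate tight enough that the resulting $2 \times 2$ quadratic form stays positive definite across the whole hypothesized range $\rho \in [0,1)$. A naive expansion $H_{\bm{u}} A_{\bm{u}} = I + (H_{\bm{u}} A_{\bm{u}} - I)$ followed by the triangle inequality yields only a coefficient $1+\rho$ in the cross term, and when combined with the spectral equivalence $\alpha^2\|H_{\bm u}^{1/2}B^Tp\|^2\geq c_{2,\bm u}^{-1}\alpha^2\|B^Tp\|^2_{A_{\bm u}^{-1}}$ this would force $\rho$ much below $1$ for positive definiteness; it is the $A_{\bm{u}}$-symmetry of $E$ that sharpens the coefficient to $\sqrt{1+\rho}$ and keeps the argument robust across the full hypothesis.
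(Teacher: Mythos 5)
Your proposal is correct: the block inversion of $\mathcal{M}_L$, the resulting expansion with the perturbed cross term $\alpha(BH_{\bm u}A_{\bm u}\bm u,p)$, the use of the $A_{\bm u}$-symmetry of $H_{\bm u}A_{\bm u}$ to get the $\sqrt{1+\rho}$ coefficient, and the subsequent inf-sup/$\theta$-splitting argument all check out, and the spectral equivalences \eqref{ine:Hu-stab}--\eqref{ine:Hp-stab} correctly transfer the bounds to the $(\mathcal{M}_D)^{-1}$-inner product. The paper omits this proof entirely (stating only that it is ``similar'' to that of Theorem~\ref{thm:2f-stab-BL}), and your argument is precisely the natural realization of that template with the inexact-block perturbation handled as in the cited literature.
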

%\begin{proof}
%The proof is almost the same as the proof of Theorem~\ref{thm:2f-ML} with small modification as the proof of Theorem~\ref{thm:2f-stab-BL}. 
%\end{proof}

\begin{theorem} \label{thm:2f-stab-BU}
There exists constants $\Sigma$ and $\Upsilon$, independent of
discretization or physical parameters, such that, for any $ \bm{0} \neq \bm{x}' \in \bm X' $, we have
\begin{equation*}
\Sigma \leq \frac{( \mathcal{A} \mathcal{B}_U \bm{x}', \bm{x}')_{\mathcal{B}_D}}{(\bm x', \bm x')_{\mathcal{B}_D}}, \ 
\frac{\|  \mathcal{A} \mathcal{B}_U \bm x' \|_{\mathcal{B}_D} }{\| \bm{x}' \|_{\mathcal{B}_D}} \leq \Upsilon,
\end{equation*}
provided that $\eta = \delta \frac{\alpha^2}{\zeta^2}$ with $\delta > 0$.
\end{theorem}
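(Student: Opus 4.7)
The plan is to mirror the argument used in Theorem \ref{thm:2f-stab-BL}, adapted to the right-preconditioned product $\mathcal{A}\mathcal{B}_U$. Introduce $\bm z := \mathcal B_U \bm x' \in \bm X$, whose components are $z_p = S_p^{-1} p'$ and $\bm z_{\bm u} = A_{\bm u}^{-1}(\bm u' - \alpha B^T z_p)$, with $S_p := \tau A_p + \eta h^2 L_p + \frac{\alpha^2}{\zeta^2} M$. A direct block-matrix computation gives $\mathcal A \bm z = (\bm u',\, -\alpha B \bm z_{\bm u} + \tilde A_p z_p)^T$, where $\tilde A_p := \tau A_p + \eta h^2 L_p$. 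Substituting into the $\mathcal B_D$-inner product (using $\mathcal B_D = \mathrm{diag}(A_{\bm u}^{-1}, S_p^{-1})$) and then eliminating $\bm z_{\bm u}$, I obtain
\[
(\mathcal A \mathcal B_U \bm x', \bm x')_{\mathcal B_D} = \| \bm u'\|_{A_{\bm u}^{-1}}^2 - \alpha (\bm u', A_{\bm u}^{-1} B^T z_p) + \alpha^2 \| B^T z_p\|_{A_{\bm u}^{-1}}^2 + (\tilde A_p z_p, z_p),
\]
while on the denominator side $(\bm x', \bm x')_{\mathcal B_D} = \| \bm u'\|_{A_{\bm u}^{-1}}^2 + (\tilde A_p z_p, z_p) + \frac{\alpha^2}{\zeta^2}\| z_p\|^2$. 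The task reduces to a two-sided comparison between these two quadratic quantities in the variables $\bm u'$ and $z_p$.

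For the lower bound, I would apply the inf-sup estimate \eqref{ine:weak-inf-sup-mod} to $z_p$, which yields $\| B^T z_p\|_{A_{\bm u}^{-1}} \geq \frac{\gamma_B}{\zeta}\|z_p\| - \frac{\xi}{\zeta} h \| \nabla z_p\|$, and then split $\alpha^2 \|B^T z_p\|_{A_{\bm u}^{-1}}^2$ using a parameter $\theta \in (\frac{1}{1+\delta/\xi^2},\, 1)$ exactly as in the proof of Theorem \ref{thm:2f-stab-BL}. Using $\eta = \delta\alpha^2/\zeta^2$ to expose the $h^2 \|\nabla z_p\|^2$ contribution and absorbing the cross term $-\alpha(\bm u', A_{\bm u}^{-1} B^T z_p)$ by a weighted Young inequality against $\|\bm u'\|^2_{A_{\bm u}^{-1}}$ and $\|B^T z_p\|^2_{A_{\bm u}^{-1}}$, I expect the problem to reduce to verifying positive definiteness of a small symmetric matrix whose entries depend only on $\gamma_B$, $\xi$, $\delta$, and the chosen splitting parameters --- the same structural quadratic form that appeared in the BL proof. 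This yields a $\Sigma > 0$ that is uniform in $\lambda$, $\mu$, $\tau$, $K$, and $h$.

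For the upper bound $\Upsilon$, Cauchy--Schwarz applied to each term, combined with the dual of \eqref{ine:B-upper} in the form $\|B^T z_p\|_{A_{\bm u}^{-1}} \leq \zeta^{-1}\|z_p\|$, bounds every summand by a uniform multiple of $(\bm x', \bm x')_{\mathcal B_D}$, and hence controls $\| \mathcal A \mathcal B_U \bm x'\|_{\mathcal B_D}$. The main obstacle, as in the lower-triangular case, is the cross term $-\alpha(\bm u', A_{\bm u}^{-1} B^T z_p)$: the Young weight and the inf-sup splitting $\theta$ must be chosen jointly so that the coefficient matrix of the resulting quadratic form remains uniformly SPD, which is the one nontrivial parameter-tracking step in the argument.
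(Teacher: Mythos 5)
Your proposal is correct and follows exactly the route the paper intends: the paper omits this proof, stating it is analogous to that of Theorem~\ref{thm:2f-stab-BL}, and your computation of $\mathcal{A}\mathcal{B}_U\bm{x}'$, the resulting identity for $(\mathcal{A}\mathcal{B}_U\bm{x}',\bm{x}')_{\mathcal{B}_D}$, and the use of the inf-sup splitting plus \eqref{ine:B-upper} are precisely that analogous argument. The only small caveat is that the interval $\bigl(\tfrac{1}{1+\delta/\xi^2},1\bigr)$ for $\theta$ is copied from the lower-triangular case, whereas here the Young inequality on the cross term $-\alpha(\bm{u}',A_{\bm u}^{-1}B^Tz_p)$ leaves only a fraction of $\alpha^2\|B^Tz_p\|^2_{A_{\bm u}^{-1}}$, so $\theta$ must be taken closer to $1$ (e.g.\ $\theta>\tfrac{1}{1+2\delta/\xi^2}$ if the Young weight is $1/2$) --- a detail you already flag by noting the weights must be chosen jointly.
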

%\begin{proof}
%The proof is similar with the proof of Theorem~\eqref{thm:2f-BU} with small modification as the proof of Theorem~\ref{thm:2f-stab-BL}. 
%\end{proof}

\begin{theorem} \label{thm:2f-stab-MU}
If the conditions~\eqref{ine:Hu-stab} and~\eqref{ine:Hp-stab} hold and $\| I - H_{\bm u} A_{\bm u} \|_{A_{\bm u}} \leq \rho$ with $0 \leq \rho <1$, and there exists constants $\Sigma$ and $\Upsilon$, independent of
discretization or physical parameters, such that, for any $ \bm{0} \neq \bm{x}' \in \bm X' $, we have
\begin{equation*}
\Sigma \leq \frac{( \mathcal{A} \mathcal{M}_U \bm{x}', \bm{x}')_{\mathcal{M}_D}}{(\bm x', \bm x')_{\mathcal{M}_D}}, \ 
\frac{\|  \mathcal{A} \mathcal{M}_U \bm x' \|_{\mathcal{M}_D} }{\| \bm{x}' \|_{\mathcal{M}_D}} \leq \Upsilon,
\end{equation*}
provided that $\eta = \delta \frac{\alpha^2}{\zeta^2}$ with $\delta > 0$. 
\end{theorem}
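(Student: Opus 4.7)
The plan is to mirror the proof of Theorem~\ref{thm:2f-stab-BL}, but in the right-preconditioner setting and with inexact blocks, using the hypothesis $\|I - H_{\bm u} A_{\bm u}\|_{A_{\bm u}} \leq \rho < 1$ to control the inexactness of $H_{\bm u}$ (and, through the spectral equivalences~\eqref{ine:Hu-stab} and~\eqref{ine:Hp-stab}, of $H_p$).  A direct block inversion gives
\[
\mathcal{M}_U = \begin{pmatrix} H_{\bm u} & -\alpha H_{\bm u} B^T H_p \\ 0 & H_p \end{pmatrix},
\]
so introducing $\hat{\bm u} := H_{\bm u}\bm u'$ and $\hat p := H_p p'$ allows me to expand $\mathcal{A}\mathcal{M}_U\bm x'$ explicitly and then pair it with $\mathcal{M}_D\bm x' = (\hat{\bm u},\hat p)$.

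After the two $\alpha(B\hat{\bm u},\hat p)$ contributions cancel, exactly as in the exact-blocks case of Theorem~\ref{thm:2f-stab-BU}, the bilinear form simplifies to
\[
(\mathcal{A}\mathcal{M}_U\bm x',\bm x')_{\mathcal{M}_D} = \|\hat{\bm u}\|_{A_{\bm u}}^2 + \alpha^2\|B^T\hat p\|_{H_{\bm u}}^2 + \tau\|\hat p\|_{A_p}^2 + \eta h^2\|\hat p\|_{L_p}^2 - \alpha\,(H_{\bm u}B^T\hat p,\hat{\bm u})_{A_{\bm u}}.
\]
Only the last term is new compared to the exact analysis; it vanishes when $H_{\bm u}=A_{\bm u}^{-1}$.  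The sharp inequality $\|H_{\bm u}B^T\hat p\|_{A_{\bm u}}^2\le(1+\rho)\|B^T\hat p\|_{H_{\bm u}}^2$, which follows from $\|I-H_{\bm u}A_{\bm u}\|_{A_{\bm u}}\le\rho$ via the observation that $H_{\bm u}^{1/2} A_{\bm u} H_{\bm u}^{1/2}$ has spectrum in $[1-\rho,1+\rho]$, lets Cauchy--Schwarz plus Young's inequality with a weight $\epsilon\in((1+\rho)/2,\,2)$ (a non-empty interval precisely when $\rho<1$) absorb the perturbation into $\|\hat{\bm u}\|_{A_{\bm u}}^2$ and $\alpha^2\|B^T\hat p\|_{H_{\bm u}}^2$, leaving strictly positive residual coefficients.

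From this point the argument runs in parallel with the proof of Theorem~\ref{thm:2f-stab-BL}: the bound $\|B^T\hat p\|_{H_{\bm u}}^2\ge(1-\rho)\|B^T\hat p\|_{A_{\bm u}^{-1}}^2$ together with the inf-sup estimate~\eqref{ine:weak-inf-sup-mod} and a $\theta$-splitting with $\theta\in(1/(1+\delta/\xi^2),1)$ converts what remains of the Schur-type term into the $(\alpha^2/\zeta^2)\|\hat p\|^2$ and $\eta h^2\|\hat p\|_{L_p}^2$ contributions required for the target lower bound.  Writing $(\bm x',\bm x')_{\mathcal{M}_D} = (H_{\bm u}\bm u',\bm u') + (H_p p',p')$ and applying the spectral equivalences~\eqref{ine:Hu-stab} and~\eqref{ine:Hp-stab} then converts the natural-norm lower bound into $\Sigma(\bm x',\bm x')_{\mathcal{M}_D}$, with $\Sigma$ depending only on $\rho$, $\gamma_B$, $\xi$, $\delta$, and the spectral-equivalence constants.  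The upper bound $\Upsilon$ is routine: term-by-term Cauchy--Schwarz, together with $\|B^T\hat p\|_{A_{\bm u}^{-1}}\le\|\hat p\|/\zeta$ from~\eqref{ine:B-upper} and the same spectral equivalences, produces a bound of the desired form.

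The main obstacle is the Young's-inequality bookkeeping in the paragraph above, where the weight must be chosen so that (i) the inexactness perturbation is strictly absorbed, (ii) all resulting constants remain independent of the physical and discretization parameters, and (iii) enough of $\alpha^2\|B^T\hat p\|_{H_{\bm u}}^2$ survives to drive the inf-sup splitting.  The sharp spectral estimate above is what makes this balance achievable over the full range $\rho<1$; the weaker bound $\|H_{\bm u}B^T\hat p\|_{A_{\bm u}}^2\le(1+\rho)^2\|B^T\hat p\|_{A_{\bm u}^{-1}}^2$ would only yield a smaller admissible range for $\rho$.
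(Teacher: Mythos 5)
The paper does not actually print a proof of this theorem---it states that the proofs of Theorems~\ref{thm:2f-stab-ML}--\ref{thm:2f-stab-MU} are analogous to that of Theorem~\ref{thm:2f-stab-BL} and omits them---and your argument is precisely the intended adaptation: invert the block upper triangular factor, expand $(\mathcal{A}\mathcal{M}_U\bm x',\bm x')_{\mathcal{M}_D}$ in terms of $\hat{\bm u}=H_{\bm u}\bm u'$, $\hat p=H_p p'$, absorb the inexactness via the spectral localization of $H_{\bm u}^{1/2}A_{\bm u}H_{\bm u}^{1/2}$ in $[1-\rho,1+\rho]$, and finish with the inf-sup/$\theta$-splitting of Theorem~\ref{thm:2f-stab-BL}; your expansion and all the key estimates check out. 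Two side remarks are slightly off but harmless: the residual term $-\alpha(H_{\bm u}B^T\hat p,\hat{\bm u})_{A_{\bm u}}$ does \emph{not} vanish when $H_{\bm u}=A_{\bm u}^{-1}$ (it reduces to $-\alpha(B\hat{\bm u},\hat p)$, which the exact analysis of $\mathcal{B}_U$ must also absorb by Cauchy--Schwarz and Young), and the Young-weight interval $\bigl((1+\rho)/2,\,2\bigr)$ is nonempty for all $\rho<3$---the restriction $\rho<1$ is instead what makes the lower bound $\|B^T\hat p\|_{H_{\bm u}}^2\ge(1-\rho)\|B^T\hat p\|_{A_{\bm u}^{-1}}^2$ useful. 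Neither affects the validity of the proof.
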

%\begin{proof}
%The proof is similar with the proof of Theorem~\eqref{thm:2f-MU} with small modification as the proof of Theorem~\ref{thm:2f-stab-BL}. 
%\end{proof}

\begin{remark}
The block upper preconditioner $\mathcal{B}_U$ here is related to the
well-known \emph{fixed-stress split}
scheme~\cite{Kim.J;Tchelepi.H;Juanes.R;others2009a}.  In fact, without
the stabilization term, i.e., $\eta=0$, it is exactly a re-cast of the
fixed-stress split scheme
\cite{White.J;Castelletto.N;Tchelepi.H2016a}.  Moveover, $\zeta^2 =
\lambda + 2\mu/d = : K_{\mathrm{dr}}$, where $K_{\mathrm{dr}}$ is the
drained bulk modulus of the solid. This is exactly the choice
suggested in~\cite{Kim.J;Tchelepi.H;Juanes.R2011a}.  Here, we give a
rigorous theoretical analysis when the fixed-stress split scheme is
used as a preconditioner.  Our analysis is more general in the sense
that $\mathcal{M}_U$ is a inexact version of the fixed-stress split
scheme, and we have generalized it to the finite-element discretization with stabilizations. 
\end{remark}

\section{Numerical Experiments} \label{sec:numerics}
%%%%%%%%%%%%%%%%%%%%%%%%%%
Finally, we provide some preliminary numerical results to demonstrate
the robustness of the proposed preconditioners.  As a discretization, we 
use the stabilized $\mathbb{P}_1$-$\mathbb{P}_1$ scheme 
described in~\cite{Rodrigo.C;Gaspar.F;Hu.X;Zikatanov.L2016a} and 
implemented in the \verb|HAZMATH| library~\cite{Adler.J;Hu.X;Zikatanov.La}.

We consider a 3D footing problem as
in~\cite{Gaspar.F;Gracia.J;Lisbona.F;Oosterlee.C2008}, on the domain,
$\Omega = (-32,32) \times (-32,32) \times (0,64)$.  This is shown in
Figure~\ref{fig:3D-footing}, and represents a block of porous soil.  A
uniform load of intensity $0.1N/m^2$ is applied in a square of size
$32 \times 32 m^2$ at the middle of the top of the domain.  The base of the domain
is assumed to be fixed while the rest of the domain is free to drain.
For the material properties, the Lame coefficients are computed in
terms of the Young modulus, $E$, and the Poisson ratio, $\nu$:
$\lambda = \frac{E \nu}{(1-2\nu)(1+\nu)}$ and $\mu =
\frac{E}{1+2\nu}$.  Since we want to study the robustness of the
preconditioners with respect to the physical parameters, we fix $E = 3
\times 10^4 \, N/m^2$ and let $\nu$ change in the experiments.  The
right side of
Figure~\ref{fig:3D-footing} shows the results of the simulation,
demonstrating the deformation due to a uniform load.

\begin{figure}[htb]
\begin{center}
\caption{Computational domain and boundary conditions} \label{fig:3D-footing}
\includegraphics*[width = 0.3\textwidth]{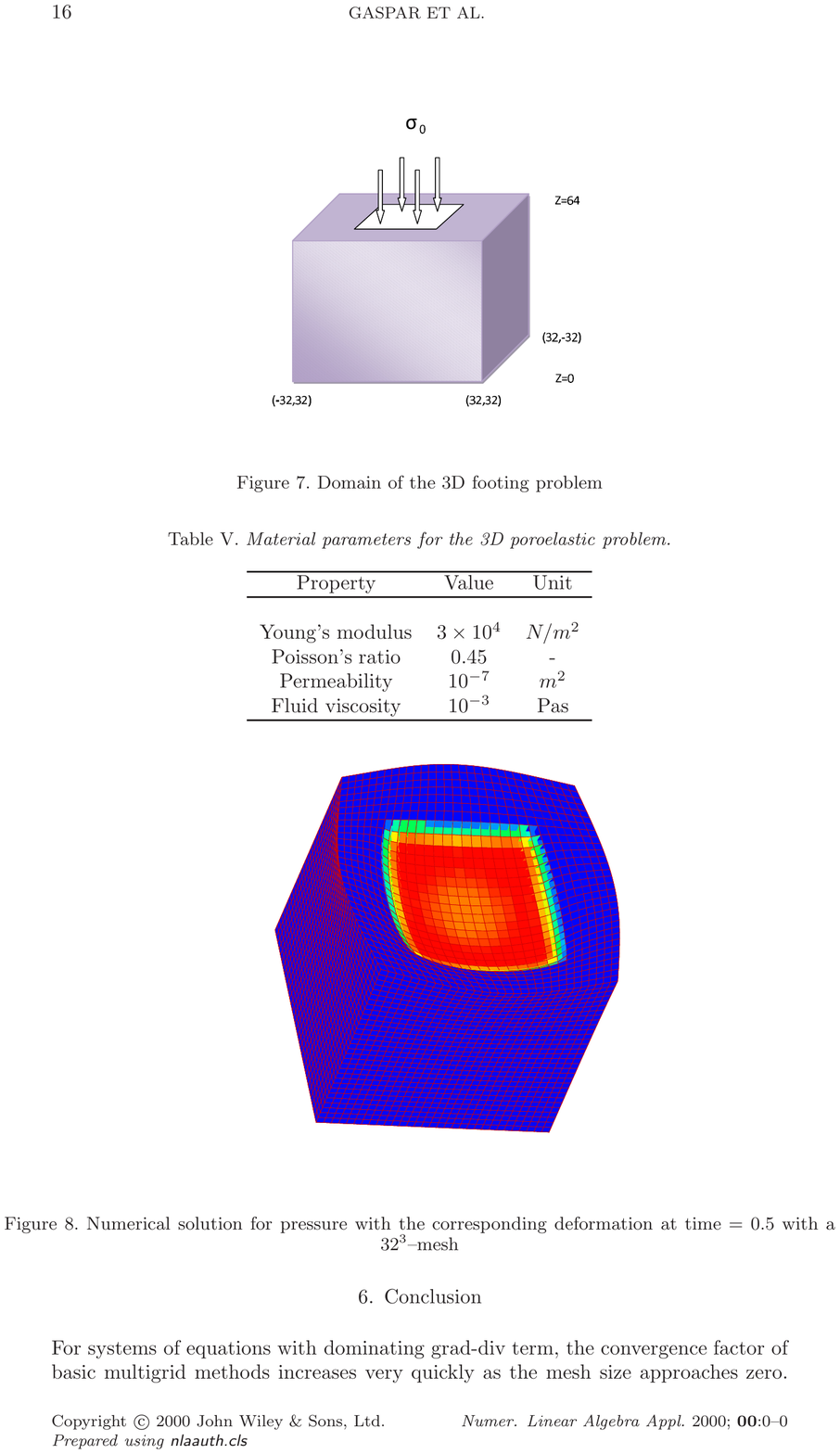}  \quad
\includegraphics*[width = 0.33\textwidth]{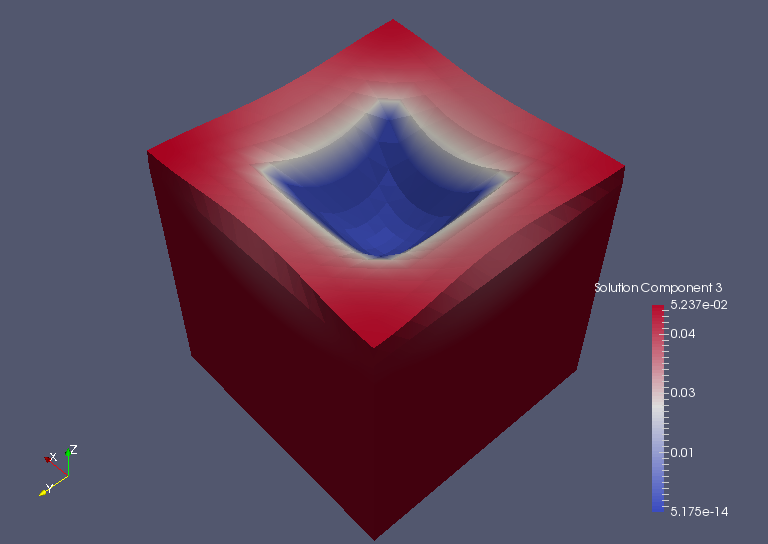} 
\end{center}
\end{figure}

We first study the performance of the preconditioners with respect to
the mesh size $h$ and time step size $\tau$.  Therefore, we fix $K =
10^{-6} \,m^2$ and $\nu = 0.2$.  We use flexible GMRES as the outer iteration with a relative residual stopping criteria of $10^{-6}$.  For $\mathcal{M}_D$, $\mathcal{M}_L$, and $\mathcal{M}_U$, the diagonal blocks are solved inexactly by preconditioned GMRES with a tolerance of $10^{-2}$.  The results are shown in Figure~\ref{tab:block-prec-ht}.  We see that the block preconditioners are effective and robust with respect to the discretization parameters $h$ and $\tau$.

\begin{table}[h!]
\begin{center}
\caption{Iteration counts for the block preconditioners ($*$ means the direct method for solving diagonal blocks is out of memory)}
\begin{tabular}{|l || c c c c|}
		\hline %\hline
&\multicolumn{4}{ |c| }{$\mathcal{B}_D$}\\ \hline 
		 \backslashbox{$\tau$}{$h$} & $\frac{1}{4}$ & $ \frac{1}{8}$ & $\frac{1}{16}$  & $\frac{1}{32}$  \\ 
		\hline 
		$0.1$ & 7 & 7 & 8 & * \\
		% Entering 2nd row
		$0.01$ & 7 & 7 & 8  & * \\
		% Entering 3rd row
		$0.001$ & 7 & 7 & 8  & * \\
		% Entering 4th row
		$0.0001$ & 7 & 7 & 8 & * \\
		% [1ex] adds vertical space
		\hline %\hline 
		\end{tabular}
\begin{tabular}{|c c c c|}
		\hline%\hline
\multicolumn{4}{ |c| }{$\mathcal{B}_L$}\\ \hline
		 $\frac{1}{4}$ \hspace{-38pt}\phantom{\backslashbox{$\tau$}{Mesh}}  & $\frac{1}{8}$ & $\frac{1}{16}$  & $\frac{1}{32}$ 
		  \\ 
		\hline 
		5 & 5 & 6 & * \\
		% Entering 2nd row
		5 & 5 & 6 & * \\
		% Entering 3rd row
		5 & 5 & 6 & * \\
		% Entering 4th row
		5 & 5 & 6 & * \\
		% [1ex] adds vertical space
		\hline %\hline 
		\end{tabular}
\begin{tabular}{|c c c c|}
		\hline%\hline
\multicolumn{4}{ |c| }{$\mathcal{B}_U$}\\ \hline
$\frac{1}{4}$ \hspace{-38pt}\phantom{\backslashbox{$\tau$}{Mesh}}  & $\frac{1}{8}$ & $\frac{1}{16}$  & $\frac{1}{32}$ 		  \\ 
		\hline 
		4 & 4 & 4 & *  \\
		% Entering 2nd row
		4 & 4 & 5 & * \\
		% Entering 3rd row
		5 & 5 & 6 & * \\
		% Entering 4th row
		5 & 5 & 6 & * \\
		% [1ex] adds vertical space
		\hline %\hline 
		\end{tabular}
\begin{tabular}{|c c c c|}
		\hline%\hline
\multicolumn{4}{ |c| }{$\mathcal{M}_D$}\\ \hline
$\frac{1}{4}$ \hspace{-38pt}\phantom{\backslashbox{$\tau$}{Mesh}}  & $\frac{1}{8}$ & $\frac{1}{16}$  & $\frac{1}{32}$ 		  \\ 
		\hline 
		8 & 8 & 9 & 9 \\
		% Entering 2nd row
		8 & 8 & 9 & 9 \\
		% Entering 3rd row
		8 & 8 & 9 & 9 \\
		% Entering 4th row
		8 & 8  & 9 & 9 \\
		% [1ex] adds vertical space
		\hline %\hline 
		\end{tabular}
\begin{tabular}{|c c c c|}
		\hline%\hline
\multicolumn{4}{ |c| }{$\mathcal{M}_L$}\\ \hline
$\frac{1}{4}$ \hspace{-38pt}\phantom{\backslashbox{$\tau$}{Mesh}}  & $\frac{1}{8}$ & $\frac{1}{16}$  & $\frac{1}{32}$ 		  \\ 
		\hline 
		6 & 6 & 8 & 8 \\
		% Entering 2nd row
		6 & 6 & 8 & 8 \\
		% Entering 3rd row
		6 & 6 & 8 & 8  \\
		% Entering 4th row
		7 & 6 & 8 & 8 \\
		% [1ex] adds vertical space
		\hline %\hline 
		\end{tabular}
\begin{tabular}{|c c c c|}
		\hline%\hline
\multicolumn{4}{ |c| }{$\mathcal{M}_U$}\\ \hline
$\frac{1}{4}$ \hspace{-38pt}\phantom{\backslashbox{$\tau$}{Mesh}}  & $\frac{1}{8}$ & $\frac{1}{16}$  & $\frac{1}{32}$ 		  \\ 
		\hline 
		6 & 6 & 8 & 8 \\
		% Entering 2nd row
		6 & 6 & 8 & 8 \\
		% Entering 3rd row
		6 & 6 & 8 & 8 \\
		% Entering 4th row
		6 & 7 & 8 & 8  \\
		% [1ex] adds vertical space
		\hline %\hline 
		\end{tabular}
\end{center}
\label{tab:block-prec-ht}
\end{table}%

Next, we investigate the robustness of the block preconditioners with respect to the physical parameters $K$ and $\nu$.  We fix the mesh size $h=1/16$ and time step size $\tau = 0.01$.  The results are shown Table~\ref{tab:K-nu}.  From the iteration counts, we can see that the proposed preconditioners are quite robust respect to the physical parameters.

\begin{table}[htp]
\begin{center}
\caption{Iteration counts when varying $K$ or $\nu$} \label{tab:K-nu}
\begin{tabular}{|c|c c c c c c|}
\hline  
 & \multicolumn{6}{ |c| }{$\nu = 0.2$ and varying $K$} \\ \hline
			   & $1$ & $10^{-2}$ & $10^{-4}$ & $10^{-6}$ & $10^{-8}$ & $10^{-10}$  \\ \hline
$\mathcal{B}_D$ & 	    4       &   7  	& 	8 	   & 8   &      8    &    8      \\
$\mathcal{B}_L$ & 	    2       &   5  	& 	6 	   & 6   &      6    &    6      \\
$\mathcal{B}_U$ & 	    3      &    4  	& 	5 	   & 5     &     5    &   5      \\ \hline
$\mathcal{M}_D$ & 	     5      &   8	& 	9	   &  9  &     9       &   9        \\
$\mathcal{M}_L$ & 	     5      &   7	& 	8 	   & 8   &    8        &  8       \\
$\mathcal{M}_U$ &   	     5	     &   7	& 	8	   & 8   &    9        &  8        \\
\hline 
\end{tabular}
\begin{tabular}{|c c c c c c|}
\hline 
 \multicolumn{6}{ |c| }{$K = 10^{-6}$ and varying $\nu$} \\ \hline
$0.1$ & $0.2$ & $0.4$ & $0.45$ & $0.49$ & $0.499$  \\ \hline
7       &   8      &     11    &  11   &    12    &    12      \\
5       &   6      &      8 	&   8    &      8    &    9      \\
4       &   5      &      6 	&   6    &      5    &    4      \\ \hline
8      &    9     &     12	&  13   &     14    &   13        \\
7      &     8     &    11     &  11   &     12     &  12       \\
7      &    8      &     7	&   8     &   17      &  11       \\
\hline 
\end{tabular}
\end{center}
%\label{default}
\end{table}%

%%%%%%%%%%%%%%%%%%%%%%%%%%
\section{Conclusions} \label{sec:conclusion}
%%%%%%%%%%%%%%%%%%%%%%%%%%
We have shown that the stability of the discrete problem, using stabilized finite elements, provides the
means for designing robust preconditioners for the
two-field formulation of Biot's consolidation model. Our analysis shows
uniformly bounded condition numbers and uniform convergence rates of the
Krylov subspace methods for the preconditioned linear systems. More
precisely, we prove that the convergence is independent of mesh size,
time step, and the physical parameters of the model.

Current work includes extending this to non-conforming (and
conforming) three-field formulations as in
\cite{Hu.X;Rodrigo.C;Gaspar.F;Zikatanov.L2016a}.  For discretizations
that are stable independent of the physical parameters, uniform
block diagonal preconditioners can be designed using the framework
developed here.  Block lower
and upper triangular preconditioners for GMRES can also be constructed in a
similar fashion.  In addition to their excellent convergence properties, 
the triangular preconditioners naturally provide an
(inexact) fixed-stress split scheme for the three-field formulation.

%%%%%%%%%%%%%%%%%%%%%%%%%%
%\begin{acknowledgement}
%%%%%%%%%%%%%%%%%%%%%%%%%%
%If you want to include acknowledgments of assistance and the like at the end of an individual chapter please use the \verb|acknowledgement| environment -- it will automatically render Springer's preferred layout.
%\end{acknowledgement}
%
%\section*{Appendix}
%\addcontentsline{toc}{section}{Appendix}
%
%

\bibliographystyle{plain}
\bibliography{Preconditioner4Biot_DD}

\end{document}